\documentclass[11pt,a4paper]{amsart}

\usepackage{graphicx} % Required for inserting images
\usepackage{amssymb}
\usepackage{amsthm}
\usepackage{amsmath}
\usepackage{parskip}
\usepackage[margin=1.1in]{geometry}
\usepackage{keytheorems}
\usepackage{amsfonts,amsbsy,amsgen,amscd,amssymb,amscd}
\usepackage[dvipsnames]{xcolor}
\usepackage{graphicx}
\usepackage{float}
\usepackage{csquotes}
\usepackage{times}
\usepackage[T1]{fontenc}

\usepackage[
  backend=bibtex,
  style=alphabetic,
  doi=true,
  maxbibnames=99, minbibnames=99
]{biblatex}

\usepackage{hyperref}

\newcommand{\kh}{Khovanski{\u\i}}
\newcommand{\khs}{Khovanski{\u\i}'s}
\newcommand{\bzt}{B\'ezout-type}

\usepackage{tikz,tikz-3dplot,tikz-cd}
\usetikzlibrary{matrix,arrows,shapes,calc,3d}
\usepackage{pgfplots}

\pgfplotsset{compat=newest}
\definecolor{dark-gray}{gray}{0.3}

\newkeytheorem{example}[parent=section]
\newkeytheorem{definition}[sibling=example]
\newkeytheorem{remark}[sibling=example]
\newkeytheorem{theorem}[sibling=example]
\newkeytheorem{proposition}[sibling=example]
\newkeytheorem{lemma}[sibling=example]
\newkeytheorem{corollary}[sibling=example]
\newkeytheorem{claim}[sibling=example]
\numberwithin{equation}{section}

\newcommand{\cU}{\mathcal{U}}
\newcommand{\boldq}{\mathbf{q}}

\newcommand{\eps}{\varepsilon}

\DeclareMathOperator{\pool}{\mathcal{A}}

\newcommand{\trans}{\intercal}
\newcommand{\econst}{\mathrm{e}}

\providecommand{\mathbbm}{\mathbb} % In case we don't load bbm
\newcommand{\R}{\mathbbm{R}}

\newcommand{\C}{\mathbbm{C}}
\newcommand{\N}{\mathbbm{N}}

\newcommand{\abs}[1]{\left\vert {#1} \right\vert}
\newcommand{\diff}[1]{\mathrm{d}{#1}}
\newcommand{\de}{\mathrm{d}}
\newcommand{\suchthat}{\ \text{ such that }\ }
\newcommand{\ip}[2]{\langle {#1}, {#2} \rangle}
\newcommand{\norm}[1]{\Vert {#1}\Vert}
\newcommand{\Pfaff}{\mathrm{Pfaff}}

\title{Khovanski{\u\i}'s B\'ezout-type Theorem for Pfaffian Functions: A Self-Contained Proof, and Applications}

\author{Martin Lotz}
\author{Abhiram Natarajan}
\date{}

\begin{document}

\begin{abstract}
We present a direct and self-contained proof of \khs{} \bzt{} bound for the number of nondegenerate solutions of a system of Pfaffian equations. We isolate the ingredients of \khs{} original argument and assemble them into a proof that avoids the general theory of integral manifolds developed in the monograph \cite{khovanskiui1991fewnomials}. Our formulation mildly refines the classical statement~\cite[\S 3.12, Corollary~5]{khovanskiui1991fewnomials}: rather than depending on the ambient dimension, our bound depends on the maximum number of variables on which any function in the Pfaffian chain depends. As a consequence, we obtain a refined bound on the number of connected components of a Pfaffian set.
\end{abstract}

\maketitle

\section{Introduction}
A fundamental result in the theory of Pfaffian functions is the \bzt{} theorem of \kh{} \cite{khovanskii1980class,khovanskiiicm,khovanskiui1991fewnomials}, which provides an explicit bound on the number of non-degenerate solutions of a system of Pfaffian equations in terms of the combinatorial data of the functions involved. This result plays a central role in the study of Pfaffian sets and has had wide-ranging influence: it underpins finiteness results in real analytic geometry and the theory of o-minimal structures~\cite{wilkie1996model,wilkie1999theorem,speissegger1999pfaffian,van1998tame}, and it lies at the origin of \khs{} theory of fewnomials and the ensuing work on sharp bounds for the number of real solutions of sparse polynomial systems~\cite{khovanskiui1991fewnomials,li2003counting,bihan2007fewnomial,sottile2011real}. It also serves as a key ingredient in recent work on polynomial partitioning, incidence geometry, bounds for tubular neighbourhoods, and neural networks \cite{bianchini2014complexity,lotz2024partitioning,natarajan2025,lezeaulotz2026}. 

In~\cite[\S3.12]{khovanskiui1991fewnomials}, Theorem~\ref{thm:khovanskii} is derived as a corollary of more general estimates on integral manifolds of differential forms with polynomial coefficients. Our main aim is to isolate the ingredients of \khs{} argument and assemble them into a direct proof that can be read independently of the surrounding theory. The resulting proof follows \khs{} original strategy while avoiding the broader machinery developed in the monograph.

\begin{theorem}[note={\kh{} \cite{khovanskii1980class,khovanskiiicm,khovanskiui1991fewnomials}},store=thm-khovanskii]
\label{thm:khovanskii}
Let $f_1,\dots,f_n$ be Pfaffian functions, defined on all of $\R^n$, with chain $\boldq=(q_1,\dots,q_s)$ and component-wise formats $(\alpha, \beta_i, s)$, such that the functions in the Pfaffian chain depend only on $\{x_1,\dots,x_k\}$ for some $k\leq n$. Then the number of regular real solutions of the system $f_1(x) = \cdots = f_n(x)= 0$ is bounded by 
\begin{equation*}
2^{\frac{s(s - 1)}{2}} \beta_1 \dotsm \beta_n \left(\beta_1 + \dotsb + \beta_n - n + \min\{k, s\}\alpha + 1 \right)^s.
\end{equation*}
\end{theorem}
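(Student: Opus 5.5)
We follow \khs{} strategy: induction on the length $s$ of the chain, removing one Pfaffian function at a time via a Rolle-type argument along a separating solution, with a Bézout bound for polynomials as the base case. For $s=0$ the $f_i$ are polynomials of degrees $\beta_i$, and Bézout's theorem gives at most $\beta_1\cdots\beta_n$ nondegenerate real solutions. This matches the formula, whose last factor is raised to the power $0$. For the inductive step we make two preliminary reductions. First, we perturb the equations $f_i=\eps_i$ with generic small constants and cut off at a large ball, so that it suffices to bound the number of nondegenerate solutions in a bounded region and all auxiliary curves below are smooth. Second, we observe that $q_s$ is a \emph{separating solution}: its graph $\{y_s=q_s(x)\}$ is an integral manifold of the Pfaffian form
\[
\omega=dy_s-\sum_{j\le k} g_{sj}(x,y_1,\dots,y_s)\,dx_j,
\]
which has polynomial coefficients.

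The core of the step is the Khovanski\u\i--Rolle lemma. We replace $q_s$ by a new variable $y$ and consider the curve $\Gamma$ cut out by $f_1=\dots=f_{n-1}=0$ and $f_n(x,q_1,\dots,q_{s-1},y)=0$ in $\R^{n+1}$, now regarded as a function of $(x,y)$. We intersect $\Gamma$ with the hypersurface $y=q_s(x)$. Because the graph of $q_s$ separates $\R^{n+1}$ into two components, between two consecutive intersection points on a compact component of $\Gamma$ there is a point where $\omega|_\Gamma=0$. This yields
\[
\#\{\text{solutions}\}\le \#\{\text{points of }\Gamma\text{ where } \omega|_\Gamma=0\}+\#\{\text{noncompact components of }\Gamma\}.
\]
The condition $\omega|_\Gamma=0$ is the vanishing of an $(n+1)\times(n+1)$ determinant built from the gradients of the defining functions and the coefficients of $\omega$. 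This determinant is a Pfaffian function in the shorter chain $(q_1,\dots,q_{s-1})$ with $y$ as a free variable. We now invoke the key refinement: the gradients with respect to $x_{k+1},\dots,x_n$ do not involve $\alpha$, because the chain depends only on $x_1,\dots,x_k$. Hence the degree of this determinant is at most $\sum_i\beta_i-n+\min\{k,s\}\alpha+1$, up to the bookkeeping for $y$. The $\min$ comes from the fact that each differentiation of a chain function raises the degree by $\alpha$, and this can occur at most $k$ times for the $x$-gradients. Equally, one can count at most $s$ times using the triangular structure of the chain.

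We then count the noncompact components by the points where $\Gamma$ meets a large sphere, or a generic hyperplane. This is another system in the shorter chain, whose degrees are bounded by the same quantities. Applying the induction hypothesis to both systems, with chain length $s-1$ and $n+1$ variables, one of the degrees replaced by the determinant degree $D$, gives a bound of the form
\[
2^{\frac{(s-1)(s-2)}{2}}\beta_1\cdots\beta_n\, D\,(\cdots)^{s-1}.
\]
The two contributions, together with the doubling from boundary effects, yield the factor $2^{s-1}$. Since $2^{(s-1)(s-2)/2}\cdot 2^{s-1}=2^{s(s-1)/2}$, the exponent closes up.

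The main obstacle is this bookkeeping in the inductive step. One must check that the degree parameter in the new system does not grow with each step, so that the final bound has the uniform factor $\left(\sum\beta_i-n+\min\{k,s\}\alpha+1\right)^s$ rather than an accumulating product. One must also verify that the noncompact-component count does not exceed the Rolle count. Our plan is to run the induction on a slightly stronger statement. In it we track $\beta_1,\dots,\beta_n$ together with one distinguished "extra" function whose degree is allowed to be $D$. We then verify that the determinant formed at the next stage again has degree at most $D$. Once the degrees are checked, the genericity issues, namely nondegeneracy of the auxiliary systems after perturbation, are handled by Sard's theorem.
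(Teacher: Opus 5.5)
Your architecture matches the paper's: replace $q_s$ by a new variable $y$, apply \kh{}--Rolle along the resulting curve, and feed $(g_1,\dots,g_n,\det M)$ back into the induction. Note that $y$ must replace $q_s$ in all $n$ equations, otherwise the Rolle determinant still involves $q_s$. Counting noncompact components through a large sphere is a legitimate alternative to the paper's device of adjoining $x_0^2+\norm{x}^2+q_s(x)^2-R^2$ to make the lifted curve compact. A generic hyperplane would not do, since a noncompact component need not cross it. The gap is in the bookkeeping you single out yourself, and it has two parts.

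First, your degree bound is one too large. Put $T:=\sum_{i=1}^n(\beta_i-1)+\min\{k,s\}\alpha$. You state $D\le T+1$, but closing the induction requires $D\le T$. Already for $s=1$ your route gives at most $\beta_1\cdots\beta_n D$ contact points (by B\'ezout) plus at most $\tfrac12\cdot 2\beta_1\cdots\beta_n$ noncompact components. That is $\beta_1\cdots\beta_n(D+1)$, which meets the target $\beta_1\cdots\beta_n(T+1)$ only if $D\le T$. Getting exactly $T$ is not a matter of counting how often $\alpha$ can appear. The $k$-bound does follow from the zero rows $k+1,\dots,n$ of $\eta$ and of the chain terms. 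The $s$-bound, however, comes from writing the $x$-gradient block as $A+UV^\trans$ with $U$ of width $s-1$, and treating the cofactors of the entries of $\eta$ separately, as in Lemma~\ref{lem:rolle-det-degree}. The triangular structure of the chain plays no role.

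Second, your account of the factor $2^{s-1}$ is wrong, and the repair you propose would fail. The degree does grow. At the next stage, the Rolle determinant of $(g_1,\dots,g_n,\det M)$ is only bounded by $\sum_{i\le n}(\beta_i-1)+(D-1)+\min\{k,s-1\}\alpha\approx 2D$, which in general exceeds $D$. So a strengthened hypothesis in which the new determinant ``again has degree at most $D$'' cannot be verified.

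No strengthening is needed. The theorem holds for arbitrary $n$ and arbitrary $\beta_i$, so it applies verbatim to the $(n+1)$-variable system with degrees $(\beta_1,\dots,\beta_n,D)$. This gives at most $2^{\frac{(s-1)(s-2)}{2}}\beta_1\cdots\beta_n\,D\bigl(\sum_i\beta_i+D-n+\min\{k,s-1\}\alpha\bigr)^{s-1}\le 2^{\frac{(s-1)(s-2)}{2}}\beta_1\cdots\beta_n\,T(2T)^{s-1}$ contact points. Hence $2^{s-1}$ comes from the degree sum doubling, and this growth is exactly what accumulates to $2^{s(s-1)/2}$. It does not come from ``two contributions'' or boundary effects.

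In your route, the noncompact term is at most $2^{\frac{(s-1)(s-2)}{2}}\beta_1\cdots\beta_n(T+2)^{s-1}$, by induction on the sphere system. It must be absorbed by the $+1$, which works because $T(2T)^{s-1}+(T+2)^{s-1}\le 2^{s-1}(T+1)^s$. In the paper, the $+1$ arises instead from the degree $\beta_0=2$ of the auxiliary equation, and the factor $2$ is cancelled by the two solutions $\pm x_0$.
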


Theorem~\ref{thm:khovanskii} records the number $k$ of variables on which the functions in the Pfaffian chain depend, whereas in the classical formulation~\cite[\S 3.12, Corollary~5]{khovanskiui1991fewnomials}, the last factor features $\min\{n, s\}\alpha$ in place of $\min\{k, s\}\alpha$. This formulation is needed to close the induction in our proof. Moreover, it immediately yields corresponding improvements to bounds on the Betti numbers of Pfaffian, semi-Pfaffian, and sub-Pfaffian sets. As an illustration, we obtain the following bound on the number of connected components of a Pfaffian set (proved in Section~\ref{sec:applications}).

\begin{theorem}[note={Bound on the zeroth Betti number of a Pfaffian set}, store=thm-pfaffian-connected-components] \label{thm:pfaffian-connected-components}
   Let $f_1, \ldots, f_m$ be Pfaffian functions defined on all of $\R^n$, with chain $\boldq=(q_1,\dots,q_s)$, each of format $(\alpha, \beta, s)$, for which the functions in the Pfaffian chain depend only on $k \leq n$ of the variables. Then the number of connected components of the set 
   \[
   \mathcal{Z}(f_1, \ldots, f_m) := \{x\in \R^n\colon g_1(x)=\cdots = g_k(x)=0\}
   \]
   is bounded by
 \begin{equation}\label{eq:jones}
 2^{\frac{s(s - 1)}{2}+1} (\alpha+2\beta-1)^{n-1}\beta \left( n (\alpha+2\beta-2)+\alpha(\min\{k, s\}-1) + 2 \right)^s.
 \end{equation}
 \end{theorem}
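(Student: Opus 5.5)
The plan follows the standard route of Zell and Gabrielov--Vorobjov for bounding $b_0$ of a Pfaffian set: reduce to a single equation, perturb it to a smooth compact hypersurface, and count critical points of a generic linear function using Theorem~\ref{thm:khovanskii}. The gain over the classical bound comes from $k$, which Theorem~\ref{thm:khovanskii} now tracks; so we must check that every auxiliary function we add either keeps the chain unchanged or depends on the chain only through $x_1,\dots,x_k$.

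\emph{Reduction to one equation.} Set $g := f_1^2+\cdots+f_m^2$. Its chain is $\boldq$ and its format is $(\alpha,2\beta,s)$, and $\mathcal{Z}(f_1,\ldots,f_m)=\{g=0\}$. Every connected component of $\{g=0\}$ meets some ball $B_R$ of radius $R$. For $\varepsilon>0$ small and $R$ large, each component of $\{g=0\}\cap B_R$ lies in a distinct component of the compact set $\{g\le \varepsilon\}\cap \{|x|^2\le R^2\}$. This uses a Łojasiewicz-type/definability argument, since Pfaffian sets are o-minimal. Consider
\[
h := (g-\varepsilon)\cdot(|x|^2-R^2) \quad\text{or}\quad g+\delta|x|^2-\varepsilon .
\]
For a generic choice of $\varepsilon,\delta$, the level set $\{h=0\}$ is a smooth compact hypersurface (Sard). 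Each component of $\{g\le\varepsilon\}$ in the ball contributes at least one boundary component, and so at least two critical points of a generic linear function, say $x_1$, restricted to $\{h=0\}$. The extra factor $2$ in \eqref{eq:jones} (the $+1$ in the exponent) is exactly what I expect to absorb the passage from components to critical points, or from $\pm$ sides. Using $h=g+\delta|x|^2-\varepsilon$ keeps the format at $(\alpha,2\beta,s)$ and the chain at $\boldq$.

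\emph{Counting critical points.} The critical points of $x_1$ on $\{h=0\}$ are the solutions of
\[
h=0,\qquad \partial h/\partial x_2=\cdots=\partial h/\partial x_n=0 .
\]
Each partial derivative is Pfaffian with the same chain and degree at most $\alpha+2\beta-1$ in $(x,y)$. By a generic linear change of coordinates, or a small perturbation preserving genericity, these solutions are regular. The change must mix only among $x_1,\dots,x_k$ and among $x_{k+1},\dots,x_n$ separately, so that the chain still depends on only $k$ variables. Theorem~\ref{thm:khovanskii} with $\beta_1=2\beta$ and $\beta_2=\cdots=\beta_n=\alpha+2\beta-1$ then gives
\[
2^{s(s-1)/2}\, 2\beta\,(\alpha+2\beta-1)^{n-1}\bigl(2\beta+(n-1)(\alpha+2\beta-1)-n+\min\{k,s\}\alpha+1\bigr)^s .
\]
The last factor simplifies to $n(\alpha+2\beta-2)+\alpha(\min\{k,s\}-1)+2$, matching \eqref{eq:jones}. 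The factor $2\beta$ then gives $2^{s(s-1)/2+1}\beta$, so this count of critical points already yields the whole bound, provided each component contributes at least one critical point. That holds because $x_1$ attains its maximum on each compact component of $\{h=0\}$, and each component of $\mathcal{Z}$ yields a distinct component of $\{h=0\}$.

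\emph{Main obstacle.} The delicate step is the genericity and regularity argument. We need to ensure that, after perturbing $\varepsilon$ and the direction of the linear function, all critical points are nondegenerate, which is what makes them regular solutions. This must be done without destroying the property that the chain depends only on $x_1,\dots,x_k$. I would first try to choose the linear function $\ell=\sum c_ix_i$ generically and to rotate separately within the two blocks of variables. If nondegeneracy needs a full rotation, I would instead apply a Sard-type argument directly to the Gauss map of $\{h=0\}$, which does not change coordinates. The degree count is then identical, with $\partial_{x_i}$ replaced by the $2\times2$ minors $c_j\partial_i h - c_i\partial_j h$, which have the same degree.
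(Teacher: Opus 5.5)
Your final route is essentially the paper's proof. It takes $f=\sum_i f_i^2$ and perturbs it to $F_\eps=f+\eps(\norm{x}^2-R^2)$, which is your $h$ with $\delta=\eps/R^2$; note that $\eps$ and $\delta$ cannot be chosen independently, since the sublevel set must contain the chosen points while staying in a fixed ball, which is why the paper applies Sard to $f/(R^2-\norm{x}^2)$. It then counts critical points of a generic linear function through the system $\partial_jF_\eps-c_j\,\partial_1F_\eps=0$, applies Theorem~\ref{thm:khovanskii} with $\beta_1=2\beta$ and $\beta_j=\alpha+2\beta-1$, and takes one extremum per component. The paper separates components by an elementary nested-compact-sets argument rather than by o-minimality.

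The one point to tighten is genericity. Block-diagonal coordinate changes cannot suffice: they only move the direction of your linear function within $\mathrm{span}(e_1,\dots,e_k)$, and not at all if $k=1$. So your Gauss-map fallback is the necessary route, not an optional one. The paper makes it rigorous by writing $\partial_jF_\eps-c_j\partial_1F_\eps=\partial_1F_\eps\,(\psi_j-c_j)$ with $\psi=(\partial_jF_\eps/\partial_1F_\eps)_{j\ge 2}$. At any solution the Jacobian then equals $(\partial_1F_\eps)^{n-1}\det[\nabla F_\eps,\nabla\psi_2,\dots,\nabla\psi_n]$, which is nonzero exactly when $c$ is a regular value of $\psi$ restricted to $F_\eps^{-1}(0)$.
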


As mentioned, the above theorem, which is a variant of~\cite[Corollary 3.3]{gabrielov2004complexity} and~\cite[Theorem 2.3]{jones2012density}, is proved using the sharpened formulation of Theorem~\ref{thm:khovanskii}. The refinement is advantageous when the Pfaffian chain is short ($s = o(n)$) or depends on few of the variables ($k = o(n)$). While the bound~\eqref{eq:jones} is not as sharp as possible, it gives a convenient way of counting the number of solutions to a system of Pfaffian equations without reference to the number of equations. We note that Theorem \ref{thm:pfaffian-connected-components} is only meant to be illustrative of how Theorem \ref{thm:khovanskii} can be used; similar refinements can be obtained for many existing bounds on Betti numbers of sets defined by Pfaffian functions such as in \cite{zell1999betti, zellthesis, gabrielov2004semisubpfaffian}.

\subsection{Related work}
An expository account that establishes the finiteness of the set of solutions is given by Marker~\cite{Marker1997}. 
Wilkie~\cite{wilkie1996model} gave a model-theoretic proof, without an explicit bound, of \khs{} theorem that the zero set of a Pfaffian function has finitely many connected components, uniformly over parameters \cite[Theorem 5.3]{wilkie1996model}.
 Bounds on the number of connected components of a Pfaffian set can be derived from \khs{} theorem by adapting an argument from semi-algebraic geometry~\cite[Chapter 11]{bochnak2013real}; see~\cite[Corollary 3.3]{gabrielov2004complexity} and~\cite[Theorem 2.3]{jones2012density} for a statement of such a bound.
  This bound serves as the starting point for a broader quantitative theory of the topology of sets defined by Pfaffian functions. For semi-Pfaffian sets, Zell~\cite{zell1999betti,zell2003quantitative} established explicit bounds, in terms of the format, on the sum of all Betti numbers, going beyond the count of connected components. Gabrielov, Vorobjov, and Zell~\cite{gabrielov2004semisubpfaffian} extended such bounds to sub-Pfaffian sets. The sharpness of \khs{} bound is studied in~\cite{bickerton2026}.

\subsection*{Acknowledgements} We are grateful to Alex Wilkie, Nicolai Vorobjov, and Adam Sheffer for helpful discussions that highlighted the value of a direct and accessible exposition of Theorem \ref{thm:khovanskii}. AN was supported by EPSRC Grant EP/V003542/1.

\section{Preliminaries}
\label{sec:pfaffian}
References for the material in this section are~\cite{khovanskiui1991fewnomials, zell2003quantitative, gabrielov2004complexity}. We assume throughout that $n\in \N_{>0}$, and for $m \in \N_{>0}$ we write $[m] := \{1, \ldots, m\}$. For $d,n\in \N$, we denote by $\R[X_1, \ldots, X_n]_{\le d}$ the vector subspace of the polynomial ring $\R[X_1,\dots,X_n]$ containing polynomials of degree at most $d$. We often implicitly identify a polynomial $P$ with the function $P\colon \R^n\to \R$ given by $P(x)=P(x_1,\dots,x_n)$ for $x\in \R^n$. We call a solution $a\in \R^n$ of a system of equations $F(x)=0$ regular (or non-degenerate), if the differential $\diff{F}(a)$ has maximal rank. 

\subsection{Pfaffian functions}
Pfaffian functions, introduced by \kh{}~\cite{khovanskiui1991fewnomials}, are functions that satisfy triangular systems of first-order partial differential equations with polynomial coefficients. Pfaffian functions have a well-defined notion of complexity that can be used to prove various quantitative results about the geometric objects they define.

\begin{definition}[Pfaffian function]\label{def:pfaffian}
Let $\cU \subset \R^n$ be an open set. A \emph{Pfaffian chain} of order $s \in \N$ and chain-degree $\alpha \in \N_{>0}$ over $\cU$ is a sequence of functions $\boldq= (q_1, \dotsc, q_s)$ with $q_i$ real-analytic on $\cU$ for $i\in [s]$, such that there exist polynomials $P_{i,j}\in \R[X_1,\dots,X_n,Y_1,\dots,Y_i]_{\leq \alpha}$, for $i\in [s]$ and $j\in [n]$, that verify
\begin{equation}\label{eq:pfaffchain}
\frac{\partial q_i}{\partial x_j}(x) = P_{i,j}(x, q_1(x), \dotsc, q_i(x)).
\end{equation}
A function $g(x)=P(x,q_1(x),\dots,q_s(x))$, with $P\in \R[X_1,\dots,X_n,Y_1,\dots,Y_s]_{\leq \beta}$ for $\beta\in \N$, is called a \emph{Pfaffian function} of chain degree $\alpha$, degree $\beta$, and order $s$.  A function $\cU\to \R^m$ is called Pfaffian if all its components are Pfaffian. 
\end{definition}

The triple $(\alpha, \beta, s)$ is called a \emph{format} of $g$. We denote by $\Pfaff_{\boldq,\beta}(\cU)$ the set of all Pfaffian functions over $\cU$ with chain $\boldq$ and degree $\beta$, and by $\Pfaff_{\alpha,\beta,s}(\cU)$ the set of all Pfaffian functions over $\cU$ with format $(\alpha,\beta,s)$.  When we say that a Pfaffian function $\cU\to \R^m$ has a particular format, we mean that every component has that same format, and we write $\Pfaff_{\alpha,\beta,s}(\cU; \R^m)$.

\begin{remark}[Uniqueness]
 Note that the Pfaffian chain associated with a Pfaffian function, and hence also a format, is not unique. In particular, if $g\in \Pfaff_{\alpha,\beta,s}(\cU)$, then $g\in \Pfaff_{\alpha',\beta',s'}(\cU)$ for $\alpha'\geq \alpha$, $\beta'\geq \beta$, and $s'\geq s$. In the following, we will often leave the chain implicit, and our results will be stated in terms of the format.
\end{remark}

\begin{remark}[Domain of definition]\label{rem:global-domain}
  The definition of a Pfaffian function involves its domain $\cU$, but 
  our proof of Theorem~\ref{thm:khovanskii} is restricted to $\cU=\R^n$.
  The bound itself, though, is not special to $\R^n$: it remains valid on the domains usually considered. Establishing it there requires the \kh{}--Rolle step of Section~\ref{sec:kho-rolle} to be carried out on non-compact curves.
  \end{remark}

\begin{remark}[Real-analyticity]\label{rem:analytic}
Since the chain $\boldq$ is real-analytic and the representing polynomial is analytic, every Pfaffian function $g = P(x, q_1, \dots, q_s)$ is real-analytic. Our arguments below, however, use only the smoothness of Pfaffian functions; in particular, the \kh{}--Rolle step (Lemma~\ref{lem:kho-rolle} and Corollary~\ref{cor:kho-rolle}) is stated for smooth maps. Requiring the chain to be merely $C^1$ would impose no real loss of generality: the analytic system~\eqref{eq:pfaffchain} admits an analytic solution, which by uniqueness coincides with the given $C^1$ one (see~\cite[Theorem 2.4.1]{krantz2002primer}).
\end{remark}

\begin{example}\label{ex:1} The following are simple examples of Pfaffian functions.
        \begin{enumerate}
            \item A polynomial $P \in \R[X_1, \dots, X_n]$ gives rise to a Pfaffian function with format $(\alpha,\deg P,0)$ for any $\alpha>0$ via the evaluation homomorphism;
            \item $\exp\colon \R\to \R$ is Pfaffian with format $(1,1,1)$. A Pfaffian chain is $\boldq=(\exp)$ and $P_{1,1}(X,Y_1)=Y_1$;
            \item $\tanh$ is Pfaffian with format $(2,1,1)$. A Pfaffian chain is $\boldq=(\tanh)$ and $P_{1,1}(X,Y_1)=1-Y_1^2$;
            \item The logistic sigmoid $\sigma = (1 + \econst^{-x})^{-1}$ is Pfaffian with format $(2, 1, 1)$. A Pfaffian chain is $\boldq=(\sigma)$ and $P_{1,1}(X,Y_1)=Y_1(1-Y_1)$;
            \item $\arctan$ is Pfaffian with format $(3, 1, 2)$. A Pfaffian chain is $\boldq=((1+x^2)^{-1},\arctan(x))$. The polynomials are $P_{1,1}(X,Y_1) = -2XY_1^2$ and $P_{2,1}(X,Y_1,Y_2)=Y_1$. 
            \item \label{point:fewnomials-example} For any monomial $m_{i_1, \ldots, i_n} := a_{i_1, \ldots, i_n}x_1^{i_1}\ldots x_n^{i_n}$ with $a_{i_1, \ldots, i_n} \neq 0$, $\left(\frac{1}{x_1}, \ldots, \frac{1}{x_n}, m_{i_1, \ldots, i_n}\right)$ is a Pfaffian chain on the domain $\{x \in \R^n \suchthat x_1\dots x_n \neq 0\}$ of order $n+1$, and chain-degree  $2$, given that
\begin{equation*}
\de m_{i_1, \ldots, i_n} = m_{i_1, \ldots, i_n}\cdot \sum_{j=1}^n i_j \frac{1}{x_j} \de x_j, \qquad \text{and} \qquad \de \frac{1}{x_j} = -\left(\frac{1}{x_j}\right)^2 \de x_j.
\end{equation*}
From this we can deduce that if $f$ is a polynomial that is a sum of $s$ such monomials, called a \emph{fewnomial of sparsity $s$}, then $f$ is a Pfaffian function (on the same domain as before) with format $(2, 1, n+s)$.
        \end{enumerate}
\end{example}

Pfaffian functions enjoy convenient closure properties under algebraic operations and composition, as illustrated in the following two results (see also~\cite[Proposition 1.8]{zell2003quantitative} and~\cite[Section 2]{lezeaulotz2026}).

\begin{lemma}\label{le:pfaffalgebra}
  Let $g\in \Pfaff_{\alpha, \beta, s}(\cU)$ and $h\in \Pfaff_{\alpha', \beta', s'}(\cU)$. Then: 
  \begin{enumerate}
  \item $g + h \in \Pfaff_{\max\{\alpha,\alpha'\}, \max\{\beta,\beta'\},s+s'}(\cU)$;
  \item $gh \in \Pfaff_{\max\{\alpha,\alpha'\}, \beta+\beta',s+s'}(\cU)$;
  \item For each $i\in [n]$, $\partial g/\partial x_i \in \Pfaff_{\alpha,\alpha+\beta-1,s}(\cU)$. 
  \end{enumerate}
  \end{lemma}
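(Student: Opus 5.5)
We prove the three items directly from Definition~\ref{def:pfaffian}, by writing $g(x)=P(x,q_1(x),\dots,q_s(x))$ with chain $\boldq=(q_1,\dots,q_s)$ and $h(x)=Q(x,r_1(x),\dots,r_{s'}(x))$ with chain $\mathbf{r}=(r_1,\dots,r_{s'})$. Here $P$ has degree at most $\beta$, $Q$ has degree at most $\beta'$, and the chains have chain-degrees $\alpha$ and $\alpha'$ respectively.

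For (1) and (2), the natural common chain is the concatenation $(q_1,\dots,q_s,r_1,\dots,r_{s'})$, of order $s+s'$. Each $q_i$ still satisfies \eqref{eq:pfaffchain} with its original polynomial $P_{i,j}$, since that polynomial involves only $X$ and $Y_1,\dots,Y_i$. Each $r_i$ satisfies $\partial r_i/\partial x_j = R_{i,j}(x,r_1,\dots,r_i)$, and we regard $R_{i,j}$ as a polynomial in $X,Y_1,\dots,Y_s,Y_{s+1},\dots,Y_{s+i}$ that does not involve the first $s$ $Y$-variables. This respects the triangularity condition. All the defining polynomials have degree at most $\max\{\alpha,\alpha'\}$, so the concatenation is a Pfaffian chain of that chain-degree. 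Relative to this chain:
\begin{itemize}
\item $g+h$ is represented by $P(X,Y_1,\dots,Y_s)+Q(X,Y_{s+1},\dots,Y_{s+s'})$, whose degree is at most $\max\{\beta,\beta'\}$;
\item $gh$ is represented by the product of these two polynomials, whose degree is at most $\beta+\beta'$.
\end{itemize}

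For (3), we keep the chain $\boldq$ and apply the chain rule:
\[
\frac{\partial g}{\partial x_i}(x)=\frac{\partial P}{\partial X_i}(x,\boldq(x))+\sum_{l=1}^s \frac{\partial P}{\partial Y_l}(x,\boldq(x))\,P_{l,i}(x,q_1(x),\dots,q_l(x)).
\]
This exhibits $\partial g/\partial x_i$ as a polynomial expression in $(x,\boldq(x))$. We bound the degree term by term:
\begin{itemize}
\item $\partial P/\partial X_i$ has degree at most $\beta-1$;
\item each product $(\partial P/\partial Y_l)\,P_{l,i}$ has degree at most $(\beta-1)+\alpha$.
\end{itemize}
Since $\alpha\ge 1$, the total degree is at most $\alpha+\beta-1$, so $\partial g/\partial x_i\in\Pfaff_{\alpha,\alpha+\beta-1,s}(\cU)$.

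The only point needing care is the edge case $\beta=0$, where $g$ is constant and its derivative is $0$. The zero function trivially has every format, so the claimed bound still holds.
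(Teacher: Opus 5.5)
Your proof is correct. The paper does not prove this lemma itself and instead defers to \cite[Proposition 1.8]{zell2003quantitative} and \cite[Section 2]{lezeaulotz2026}; your argument is the standard proof given there, namely concatenating the two chains for (1)--(2), which preserves triangularity and has chain-degree $\max\{\alpha,\alpha'\}$, and applying the chain rule over the same chain $\boldq$ for (3), which is the same expansion the paper writes out in \eqref{eq:expansion}.
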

  
  \begin{remark}\label{re:1}
  Note that the bound on the order of $g+h$ and $gh$ is in general not sharp: if $g$ and $h$ are Pfaffian with respect to the same chain $\boldq$ of order $s$, then clearly the order of $g+h$ and $gh$ is also $s$. It also follows from Lemma~\ref{le:pfaffalgebra} that a linear combination of Pfaffian functions $g_1,\dots,g_m$ with formats $(\alpha_i,\beta_i,s_i)$ is Pfaffian with format $(\max_i \{\alpha_i\},\max_i\{\beta_i\}, \sum_i s_i)$. 
  \end{remark}
  
  \begin{lemma}\label{le:composition}
  Let $g\in \Pfaff_{\alpha,\beta,s}(\cU;\R^m)$ and $h\in \Pfaff_{\alpha',\beta',s'}(\R^m)$ be Pfaffian functions, and assume $s'\geq 1$. 
  \begin{enumerate}
  \item $h\circ g \in \Pfaff_{(\alpha'+1)\beta+\alpha-1, \beta\beta', ms+s'}(\cU)$;
  \item If all the functions in $g$ depend on the same Pfaffian chain $\boldq$ of order $s$, then $h\circ g$ has Pfaffian order $s+s'$.
  \end{enumerate}
  \end{lemma}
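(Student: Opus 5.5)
We need to prove Lemma \ref{le:composition}, the composition lemma. Write $g=(g_1,\dots,g_m)$ with $g_\ell(x)=P_\ell(x,\boldq^{(\ell)}(x))$, where each $\boldq^{(\ell)}$ is a chain of order $s$ and degree $\alpha$ and $\deg P_\ell\le\beta$. Write $h(y)=Q(y,\boldsymbol{r}(y))$ with chain $\boldsymbol{r}=(r_1,\dots,r_{s'})$ on $\R^m$ of degree $\alpha'$, defined by polynomials $R_{i,\ell}$ of degree at most $\alpha'$, and $\deg Q\le\beta'$. The plan is to exhibit an explicit Pfaffian chain on $\cU$ for $h\circ g$ and then read off its format.

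The candidate chain is the concatenation $\boldq^{(1)},\dots,\boldq^{(m)}$ of length $ms$, followed by $r_1\circ g,\dots,r_{s'}\circ g$. In case (2) the first part is the single common chain $\boldq$ of length $s$, which gives order $s+s'$.

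First we check that this is a chain. The $\boldq^{(\ell)}$ functions are triangular by assumption. Their derivatives involve only their own chain, so they still have degree $\le\alpha$ when the concatenated list is viewed as one chain with lower-triangular dependence. Next, by the chain rule,
\[
\frac{\partial (r_i\circ g)}{\partial x_j}=\sum_{\ell=1}^m R_{i,\ell}\bigl(g,r_1\circ g,\dots,r_i\circ g\bigr)\,\frac{\partial g_\ell}{\partial x_j}.
\]
We then substitute $g_\ell=P_\ell(x,\boldq^{(\ell)})$.

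The main work is the degree count in the variables $(X, Y^{\boldq}, Z)$, where $Z$ stands for the new chain entries $r_i\circ g$. A monomial of $R_{i,\ell}$ has degree $a$ in the $y$-slots and $b$ in the $Z$-slots, with $a+b\le\alpha'$. After substitution it has degree at most $a\beta+b\le\alpha'\beta$, using $\beta\ge1$; the case $\beta=0$ is trivial since then $g$ is constant. By Lemma \ref{le:pfaffalgebra}(3), the factor $\partial g_\ell/\partial x_j$ is a polynomial of degree $\le\alpha+\beta-1$ in $(X,Y^{\boldq})$. The product therefore has degree at most $\alpha'\beta+\alpha+\beta-1=(\alpha'+1)\beta+\alpha-1$. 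This quantity dominates $\alpha$, so it serves as a common chain degree for the whole chain.

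Finally, $h\circ g=Q(g,\boldsymbol r\circ g)$. A monomial of $Q$ of total degree $\le\beta'$, after substituting $g_\ell$ (degree $\le\beta$) into the $y$-slots and keeping the $Z$-slots of degree 1, has degree at most $\beta\beta'$. This gives the stated degree.

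The step I expect to need the most care is this bookkeeping of mixed degrees. In particular, the $Z$ variables carry weight $1$ rather than $\beta$, which is why the bounds $\alpha'\beta$ and $\beta\beta'$ hold rather than something larger. Real-analyticity of the new chain functions follows from that of $r_i$ and $g$. The hypothesis $s'\ge1$ only ensures the format statement is nondegenerate; if $s'=0$, $h$ is a polynomial and the chain is just the $\boldq^{(\ell)}$.
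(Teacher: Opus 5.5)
Your proof is correct. You concatenate the chains of the $g_\ell$ (or use the common chain $\boldq$ once in case (2)), append $r_1\circ g,\dots,r_{s'}\circ g$, and count degrees with the new chain variables carrying weight one; this gives chain degree $\alpha'\beta+(\alpha+\beta-1)=(\alpha'+1)\beta+\alpha-1$ and degree $\beta\beta'$, as stated. The paper gives no proof of this lemma and only cites Zell and Lezeau--Lotz; your chain-rule construction is the standard argument that yields exactly this format, so there is no separate proof in the paper to compare against.
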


  In particular, in the special case where $g\colon \R^n\to \R^m$ is a linear map shows that the Pfaffian structure is invariant under affine change of coordinates.

The two main ingredients of the proof of Theorem~\ref{thm:khovanskii} are B\'ezout's Inequality for real algebraic sets, Theorem~\ref{thm:real-bezout}, and a generalization of Rolle's Theorem~\cite[\S 3.2]{khovanskiui1991fewnomials}.

\begin{theorem}[Real B\'ezout Inequality] \label{thm:real-bezout}
  Let $P_1, \ldots, P_n \in \R[X_1, \ldots, X_n]$ be polynomials of degrees $d_1, \dots, d_n$. Then the number of non-degenerate solutions of the system
  \begin{equation*}
  P_1(x) = \cdots = P_n(x) = 0
  \end{equation*}
  is bounded by the product of the degrees $d_1 \cdots d_n$.
  \end{theorem}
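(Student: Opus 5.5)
We will prove the Real Bézout Inequality by perturbing the system into a generic complex one and then counting complex solutions with the classical projective Bézout theorem. The idea is that non-degenerate real solutions are isolated and survive small perturbations. A generic perturbation has exactly $d_1\cdots d_n$ solutions in $\C^n$ (counted in projective space, all simple and none at infinity). Real solutions inject into complex ones, which gives the bound.

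\emph{Step 1: stability of non-degenerate solutions.} Let $a^{(1)},\dots,a^{(N)}$ be any finite collection of distinct non-degenerate real solutions of $P=(P_1,\dots,P_n)=0$. By the inverse function theorem, each $a^{(j)}$ has a neighbourhood $V_j$ (chosen pairwise disjoint) on which $P$ is a diffeomorphism onto a neighbourhood of $0$. Hence, for any polynomial map $Q$ whose coefficients are sufficiently close to those of $P$ (same degree bounds), the system $Q=0$ still has at least one non-degenerate solution in each $V_j$. This follows from the implicit function theorem applied to the family $P+t(Q-P)$, or from a degree/Brouwer argument on a small ball where $\diff P$ stays invertible. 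So it suffices to bound solution counts for perturbed systems. Note that $N$ is a priori arbitrary, so this also shows the number of non-degenerate solutions is finite once the bound is proved.

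\emph{Step 2: generic perturbation.} Homogenize: consider the space of tuples $(Q_1,\dots,Q_n)$ with $\deg Q_i\le d_i$. We claim that for $Q$ in a dense (Zariski-open, nonempty) subset of this parameter space, the projective closures $\overline{Z(Q_i)}\subset\mathbb{P}^n_\C$ meet in exactly $d_1\cdots d_n$ points, all transversal and none on the hyperplane at infinity. This follows from the standard projective Bézout theorem together with a Bertini/Sard-type genericity statement. Concretely, the system $Q_i=\ell_{i,1}\cdots\ell_{i,d_i}$, a product of generic affine linear forms, has exactly $\prod d_i$ simple affine solutions. Its top-degree parts have no common nontrivial zero, which is an open condition. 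The locus of systems with a degenerate solution or a solution at infinity is a proper algebraic subset (via resultants/discriminants). Real points are Zariski dense in the real parameter space, so we may choose $Q$ real, generic, and arbitrarily close to $P$.

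\emph{Step 3: conclusion.} For such a real generic $Q$, the $N$ perturbed real solutions from Step 1 are distinct points of the complex solution set, which has cardinality $d_1\cdots d_n$. Thus $N\le d_1\cdots d_n$.

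The main obstacle is Step 2: justifying rigorously that the "bad" set of coefficient tuples is contained in a proper algebraic subset. This requires knowing that for generic $Q$ the count is exactly $\prod d_i$, which is where projective Bézout, counted with multiplicity, is invoked. The first thing to try is an incidence-variety argument: project $\{(Q,x): Q(x)=0\}$ to parameter space and apply Sard to show that generic fibres are reduced. The explicit product-of-linear-forms example then shows the generic fibre is finite, with no points at infinity.
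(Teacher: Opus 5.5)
Your argument is correct, but it takes a different route from the paper. The paper does not perturb anything. It homogenizes the given system and notes that each non-degenerate real solution $x$ yields a non-singular point $(1:x_1:\cdots:x_n)$ of the complex projective solution set. It then cites \cite[Lemma~11.5.1]{bochnak2013real}, which bounds the number of non-singular points of any $n$ homogeneous equations in $\mathbb{P}^n(\C)$ by $d_1\cdots d_n$, even when the full solution set has positive-dimensional components or points at infinity.

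Your Steps 1--3 in effect prove the affine real case of that cited lemma directly. Persistence of non-degenerate solutions under small perturbation (Step 1) reduces everything to a nearby system with a finite solution set, where the classical B\'ezout count applies. This makes the argument self-contained modulo B\'ezout for zero-dimensional intersections, at the cost of a genericity step.

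That genericity step can be simplified considerably. You do not need the generic solutions to be simple, nor any discriminant, nor the product-of-linear-forms example. It is enough to pick real $Q$ near $P$ whose top-degree forms have no common non-trivial zero in $\C^n$. This is the non-vanishing of a single resultant, a polynomial in the coefficients that is not identically zero (e.g.\ it is non-zero at $Q_i = X_i^{d_i}$). Hence the condition holds on an open dense subset of the real coefficient space, which is the precise form of density you need, rather than Zariski density. For such $Q$ the projective solution set misses the hyperplane at infinity, so it is finite, since a positive-dimensional component would meet every hyperplane. B\'ezout with multiplicities then gives at most $d_1\cdots d_n$ distinct solutions, which is all Step 3 uses.

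Finally, in Step 1, apply the implicit function theorem with the whole coefficient vector of $Q$ as the parameter, or use your degree argument. The one-parameter family $P+t(Q-P)$ for a fixed $Q$ does not by itself reach $t=1$, whereas the full-parameter version gives one neighbourhood of $P$ that works uniformly for all $j$.
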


\begin{proof}
For each $i \in [n]$, let $P_i^h \in \C[X_0, X_1, \ldots, X_n]$ denote the homogenization of $P_i$, a homogeneous polynomial of degree $d_i$ with $P_i^h(1, x) = P_i(x)$. A non-degenerate solution $x \in \R^n$ of the system is a non-singular solution over $\C$, and thus corresponds to the non-singular projective solution $(1 : x_1 : \cdots : x_n)$ of the system $P_1^h = \cdots = P_n^h = 0$ (see~\cite[\S 11.5]{bochnak2013real}); distinct solutions give rise to distinct projective points. By~\cite[Lemma~11.5.1]{bochnak2013real}, the number of non-singular projective solutions of a system of $n$ homogeneous polynomials in $n+1$ variables is bounded by the product of the degrees, and the claim follows.
\end{proof}

\section{\kh{}--Rolle Theory}
\label{sec:kho-rolle}
In the following, we refer to two roots $a$, $b$ of a function $g\colon C\to \R$ on a smooth $1$-manifold $C\subset \R^{n+1}$ as {\em consecutive roots} if there is an arc connecting $a$ and $b$ that does not contain any other root of $g$. A point $x$ is said to be between two consecutive roots $a$ and $b$ if $x$ is on the arc between $a$ and $b$ that does not contain any other roots. A function $f\colon C\to \R$ is said to \emph{change sign} at $x$, if $f(x)=0$ and there exists an open neighbourhood $U\subset C$, $x\in U$, such that $U\backslash \{x\}$ is the union of two connected components, and $f$ has different signs on each of them. 
For an illustration of the following generalization of Rolle's Lemma, see Figure~\ref{fig:kho-rolle}, as well as~\cite{bates2011khovanskii}.

\begin{lemma}[\kh{}--Rolle] \label{lem:kho-rolle}
Let $G = (g_1, \ldots, g_n)\colon \R^{n+1} \rightarrow \R^n$ be a smooth map and let $y \in \R^n$ be a regular value of $G$, so that $C = G^{-1}(y)$ is a smooth $1$-submanifold of $\R^{n+1}$. Let $g\colon \R^{n+1} \rightarrow \R$ be a smooth map such that all the roots of $g|_C$ are non-degenerate, i.e., the differential $\diff(g|_C)$ does not vanish at any root. Then for any two distinct consecutive roots $a$ and $b$ of $g|_C$, the determinant
\begin{equation}\label{eq:kho-rolle-det}
 \det([\nabla g_1, \ldots, \nabla g_n, \nabla g])
\end{equation}
is non-zero at $a$ and at $b$, and takes values of opposite signs at these two points. In particular, it vanishes at some point between $a$ and $b$.
\end{lemma}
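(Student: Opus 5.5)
We work on the arc of $C$ joining $a$ and $b$ and compare the determinant~\eqref{eq:kho-rolle-det} with the derivative of $g$ along a fixed orientation of $C$. Since $y$ is a regular value, the gradients $\nabla g_1,\dots,\nabla g_n$ are linearly independent at every point of $C$. Their generalized cross product then gives a nonvanishing tangent field to $C$. Concretely, define $\tau(x)\in\R^{n+1}$ by
\[
\ip{\tau(x)}{v} = \det([\nabla g_1(x),\ldots,\nabla g_n(x), v]) \qquad \text{for all } v\in\R^{n+1}.
\]
Then $\tau$ is smooth and orthogonal to each $\nabla g_i$, so it is tangent to $C$. It is nonzero on $C$, since $\tau=0$ would force the determinant to vanish for every $v$, contradicting linear independence. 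Hence $\tau$ orients $C$ continuously, and the determinant in~\eqref{eq:kho-rolle-det} equals $\ip{\tau}{\nabla g} = \diff g(\tau)$, the derivative of $g|_C$ along this orientation.

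Next, parametrize the arc between $a$ and $b$ as an integral curve of $\tau$. Choose a regular parametrization $\gamma\colon[0,1]\to C$ with $\gamma(0)=a$, $\gamma(1)=b$ and $\gamma'(t)=\lambda(t)\tau(\gamma(t))$. Because the arc is connected and both $\gamma'$ and $\tau$ are nonvanishing tangent vectors of a $1$-manifold, $\lambda$ is continuous and nonzero, so it has constant sign. Replacing $\gamma$ by $\gamma(1-t)$ if needed, we may take $\lambda>0$. Now set $h=g\circ\gamma$, so that $h'(t)=\lambda(t)\det(\cdots)(\gamma(t))$. Non-degeneracy of the roots gives $h'(0)\neq 0$ and $h'(1)\neq0$, so the determinant is nonzero at $a$ and at $b$.

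For the sign comparison, note that $a$ and $b$ are consecutive, so $h$ has no zeros in $(0,1)$ and therefore has constant sign there, say positive. Then $h(0)=0$ and $h>0$ just to the right of $0$ give $h'(0)>0$. Likewise $h(1)=0$ and $h>0$ just to the left of $1$ give $h'(1)<0$. Since $\lambda>0$, the determinant has opposite signs at $a$ and $b$. The intermediate value theorem along $\gamma$ then produces a zero strictly between them.

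The main obstacle is making the arc and its parametrization rigorous. There are two cases to handle. If $C$'s component is a circle, the two points split it into two arcs, and we use the arc free of other roots. Otherwise the component is a line, and the arc is the segment between $a$ and $b$. In both cases, the classification of connected $1$-manifolds gives a diffeomorphism of the arc with $[0,1]$, and from that the claim that $\lambda$ has constant sign follows. The degenerate case where the circle contains only the roots $a$ and $b$ needs no special treatment: either arc works, and the same sign argument applies.
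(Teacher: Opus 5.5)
Your proposal is correct and follows essentially the same route as the paper: your $\tau$ is exactly the paper's vector field $X_{\omega}$, the determinant is identified with the derivative of $h = g\circ\gamma$ along the arc, and opposite signs at $a$ and $b$ follow because $h$ has constant sign between consecutive roots (the paper phrases this as a contradiction via the intermediate value theorem). Parametrizing by an arbitrary regular curve with $\gamma' = \lambda\tau$, $\lambda>0$, instead of by an integral curve of $X_{\omega}$ is a harmless variant that avoids appealing to ODE existence.
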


\begin{figure}[ht]
  \centering
  \begin{tikzpicture}[>=stealth, scale=1.2,
      gvec/.style={->, thick, RoyalBlue},
      nvec/.style={->, thick, BrickRed},
      dot/.style={circle, fill, inner sep=1.5pt}]
    % the line {g = 0} (the x-axis, since g(x,y) = y)
    \draw[black!50] (-2.9,0) -- (3.6,0) node[right, black!60] {$\{g = 0\}$};
    % the curve C = {f = 0}:  y = -x^2/2 + x/4 + 3/2
    \draw[very thick] plot[domain=-1.78:2.28, smooth, samples=80]
      (\x, {-\x*\x/2 + \x/4 + 1.5});
    \node[right] at (2.32,-0.55) {$C = \{f = 0\}$};
    % points: a = (-1.5,0), b = (2,0), c = (0.25, 1.53125)
    \node[dot, label={[label distance=2pt]below right:{$a$}}] (a) at (-1.5,0) {};
    \node[dot, label={[label distance=2pt]below left:{$b$}}] (b) at (2,0) {};
    \node[dot, label={[label distance=1pt]below:{$c$}}] (c) at (0.25,1.53125) {};
    % gradients of g (vertical at every point)
    \draw[gvec] (a.center) -- +(0,0.9) node[above] {$\nabla g$};
    \draw[gvec] (b.center) -- +(0,0.9) node[above] {$\nabla g$};
    % outer normals \nabla f
    \draw[nvec] (a.center) -- +(-0.781,0.446) node[above left=-2pt] {$\nabla f$};
    \draw[nvec] (b.center) -- +(0.781,0.446) node[above right=-2pt] {$\nabla f$};
    % at c the two gradients are parallel: drawn slightly offset
    \draw[nvec] ($(c.center)+(-0.00,0)$) -- +(0,1.2) node[left=-1pt] {$\nabla f$};
    \draw[gvec] ($(c.center)+(0.00,0)$)  -- +(0,0.9) node[right=-1pt] {$\nabla g$};
    % sign of the Rolle determinant det([\nabla f, \nabla g])
    \node[black!60] at (-1.15,-0.6) {$\det<0$};
    \node[black!60] at (1.6,-0.6)  {$\det>0$};
    \node[black!60] at (0.25,3.15) {$\det=0$};
  \end{tikzpicture}
  \caption{Lemma~\ref{lem:kho-rolle} in the plane ($n = 1$, $G = f$), with $C = f^{-1}(0)$ and $g(x_1, x_2) = x_2$. At the consecutive roots $a$ and $b$ of $g|_C$, the determinant $\det([\nabla f, \nabla g])$ is non-zero and takes values of opposite signs. It vanishes at the intermediate point $c$, where the normal $\nabla f$ to the curve is parallel to $\nabla g$.}
  \label{fig:kho-rolle}
\end{figure}

\begin{proof}
The $1$-form $\omega(v) = \det([\nabla g_1, \ldots, \nabla g_n, v])$ on $\R^{n+1}$ induces a vector field $X_{\omega}$ that satisfies $\ip{X_{\omega}}{v} = \omega(v)$ for any $v \in T_{x}\R^{n+1}$. Since $y \in \R^n$ is a regular value of $G$, $X_{\omega}$ restricts to a non-vanishing tangent vector field on $C$.
Let $C_0 \subseteq C$ be an arc joining $a$ and $b$ that contains no other root of $g|_C$. Let $I=[t_1,t_2]$ be an interval, and let $\gamma : I \rightarrow C_0$ be a parameterization of $C_0$ such that $\dot{\gamma}(t) = X_{\omega}(\gamma(t))$ for all $t$ (see \cite[Chapter 14]{tu2011manifolds} for a proof of existence of such a parameterization), so that $\{\gamma(t_1), \gamma(t_2)\} = \{a, b\}$. Consider the function $h := g \circ \gamma\colon I \rightarrow \R$, so that $h(t_1) = h(t_2) = 0$ and $h(t)\neq 0$ for all $t\in I\backslash \{t_1, t_2\}$.
The derivative of $h$ at a point $t \in I$ is then given by
\begin{equation*}
  \dot{h}(t) = \ip{X_{\omega}(\gamma(t))}{\nabla g(\gamma(t))} = \omega(\nabla g(\gamma(t))).
\end{equation*}
Note that $\dot{h}(t) = \det([\nabla g_1, \ldots, \nabla g_n, \nabla g])$ evaluated at $\gamma(t)$.

Since $\dot{h}(t_i) = \diff{(g|_C)}\bigl(X_\omega(\gamma(t_i))\bigr)$, where $X_\omega$ does not vanish on $C$ and the roots of $g|_C$ are non-degenerate, we have $\dot{h}(t_1) \neq 0$ and $\dot{h}(t_2) \neq 0$. We first show that $\dot{h}(t_1)$ and $\dot{h}(t_2)$ must have opposite signs. Indeed, if both have the same sign, say they are both positive, then $h$ is increasing at $t_1$ and increasing at $t_2$. Since $h(t_1) = 0$ and $\dot{h}(t_1) > 0$, there exists $\eps_1 > 0$ with $h(t_1 + \eps_1) > 0$. Since $h(t_2) = 0$ and $\dot{h}(t_2) > 0$, there exists $\eps_2 > 0$ with $h(t_2 - \eps_2) < 0$. Choosing $\eps_1, \eps_2 < (t_2-t_1)/2$, the intermediate value theorem gives a point $s\in (t_1+\eps_1, t_2-\eps_2)$ with $h(s) = 0$, contradicting the assumption that $\gamma(t_1)$ and $\gamma(t_2)$ are consecutive roots. The case where both derivatives are negative is analogous.

Since $\dot{h}(t)$ is the determinant~\eqref{eq:kho-rolle-det} evaluated at $\gamma(t)$, it follows that the determinant is non-zero and takes values of opposite signs at $\gamma(t_1)$ and $\gamma(t_2)$, that is, at $a$ and $b$. Being continuous, the determinant then vanishes at some point of $C_0$ between $a$ and $b$ by the intermediate value theorem.
\end{proof}

\begin{corollary} \label{cor:kho-rolle} Let $G = (g_1, \ldots, g_n)\colon \R^{n+1} \rightarrow \R^{n}$ be a smooth map and let $y \in \R^n$ be a regular value of $G$, so that $C = G^{-1}(y)$ is a smooth $1$-submanifold of $\R^{n+1}$. Assume that $C$ is compact. Let $g\colon \R^{n+1} \rightarrow \R$ be a smooth map such that all the roots of $g|_C$ are non-degenerate, and let $\psi\colon \R^{n+1} \rightarrow \R$ be a smooth function that coincides with the determinant~\eqref{eq:kho-rolle-det} at every point of $g^{-1}(0)$. Then there exists a regular value $\delta > 0$ of $\psi|_C$ such that
\begin{equation*}
  \abs{\left\{x \in C \suchthat g(x) = 0\right\}} \le \abs{\left\{x \in C \suchthat \psi(x) = \delta\right\}}.
\end{equation*}
\end{corollary}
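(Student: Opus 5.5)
We use the \kh{}--Rolle Lemma~\ref{lem:kho-rolle} on each compact component of $C$, and then choose $\delta$ small by Sard's theorem. Since $C$ is a compact smooth $1$-manifold, it is a finite disjoint union of embedded circles $C_1,\dots,C_r$. The roots of $g|_C$ are non-degenerate, hence isolated, so by compactness there are finitely many of them, say $N$ in total. We then argue circle by circle. Fix a circle $C_j$ containing $N_j\ge 1$ roots of $g|_C$; circles with no roots contribute nothing to the left-hand side.

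\textbf{Sign pattern on a circle.} Write $D$ for the determinant~\eqref{eq:kho-rolle-det}. By Lemma~\ref{lem:kho-rolle}, $D$ is non-zero at every root and has opposite signs at consecutive roots. When $N_j\ge 2$, going around the circle means the signs alternate, so $N_j$ is even. Exactly $N_j/2$ roots on $C_j$ have $D>0$, and between any two cyclically consecutive roots there is an arc on which $D$ passes from positive to negative or back. The case $N_j=1$ cannot occur: then $a=b$ traverses the whole circle, and the argument in the proof of Lemma~\ref{lem:kho-rolle} with $t_1,t_2$ the endpoints of one period gives $\dot h(t_1)=\dot h(t_2)$, contradicting the required sign change. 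More simply, $h$ has a single non-degenerate zero on a circle, so $h$ changes sign only once, which is impossible for a periodic function. Since $\psi=D$ on $g^{-1}(0)$, the same sign pattern holds for $\psi$ at the roots.

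\textbf{Choosing $\delta$.} Let $m>0$ be the minimum of $\abs{\psi}$ over the finitely many roots. By Sard's theorem applied to the smooth function $\psi|_C$, its critical values have measure zero. We pick a regular value $\delta\in(0,m)$; we may additionally require $-\delta$ to be a regular value, though only $\delta$ is needed.

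\textbf{Counting.} On $C_j$, list the roots cyclically as $a_1,\dots,a_{N_j}$ with alternating signs of $\psi$. On each of the $N_j$ arcs $[a_i,a_{i+1}]$, one endpoint has $\psi\ge m>\delta$ and the other has $\psi\le -m<\delta$. By the intermediate value theorem $\psi$ attains $\delta$ in the interior of each arc. The open arcs are disjoint, so $C_j$ contains at least $N_j$ points with $\psi=\delta$. Summing over $j$ gives the inequality.

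\textbf{Main obstacle.} The delicate points are, first, using compactness so that $C$ is a union of circles and consecutive roots close up cyclically (this is what forces $N_j$ arcs rather than $N_j-1$), and second, ruling out the case of a single root on a circle, as done above. The rest is the intermediate value theorem together with Sard's theorem.
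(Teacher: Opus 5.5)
Your proof is correct and follows essentially the same route as the paper: decompose the compact curve into circles, apply Lemma~\ref{lem:kho-rolle} on each of the $N_j$ cyclic arcs, pick a regular value $\delta$ below $\min\abs{\psi}$ over the roots via Sard, and use the intermediate value theorem in the interior of each arc. The differences are minor. The paper excludes a single root on a circle directly by the parity of the sign changes of $g$, which is your ``more simply'' argument. It also treats separately the case where $g|_C$ has no roots at all, where your $m$ would be a minimum over an empty set.
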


\begin{proof}
Since $C$ is a compact $1$-manifold, it has finitely many connected components, each diffeomorphic to $S^1$~\cite[Appendix]{milnor1997topology}. The roots of $g|_C$ are non-degenerate, hence isolated, and therefore finite in number. Moreover, $g|_C$ changes sign at every root, since $\diff(g|_C)$ does not vanish there. If $g|_C$ has no roots, the claimed inequality holds trivially for any regular value $\delta > 0$ of $\psi|_C$, and such a value exists by Sard's theorem. Assume therefore that $g|_C$ has at least one root.

Consider a connected component $\Gamma$ of $C$ containing $p \geq 1$ roots of $g|_C$. The number $p$ is even: choosing a point $c \in \Gamma$ with $g(c) \neq 0$ and traversing $\Gamma$ once, starting and ending at $c$, the sign of $g$ changes exactly at the roots and returns to its initial value, so the number of sign changes, and hence of roots, is even. In particular, $p \geq 2$. Let $\theta\colon S^1 \rightarrow \Gamma$ be a diffeomorphism and let $\theta(t_1), \ldots, \theta(t_p)$ be the roots of $g$ on $\Gamma$, listed in cyclic order. For each $i \in [p]$, the roots $\theta(t_i)$ and $\theta(t_{(i \bmod p)+1})$ are distinct and consecutive, and bound an arc $A_i \subseteq \Gamma$ containing no other roots; the arcs $A_1, \ldots, A_p$ cover $\Gamma$ and have pairwise disjoint interiors. By Lemma~\ref{lem:kho-rolle}, the determinant~\eqref{eq:kho-rolle-det} is non-zero and takes values of opposite signs at the two endpoints of each arc $A_i$. Since these endpoints are roots of $g$, and $\psi$ coincides with the determinant~\eqref{eq:kho-rolle-det} on $\{g = 0\}$, the same holds for $\psi$.

In particular, $\psi$ does not vanish at any root of $g|_C$, so that
\begin{equation*}
  \eps := \min\left\{\abs{\psi(x)} \suchthat x\in C, \ g(x) = 0\right\} > 0,
\end{equation*}
the minimum being over a non-empty finite set. Fix $\delta \in (0, \eps)$ and consider an arc $A_i$ as above. At one endpoint of $A_i$ we have $\psi \geq \eps > \delta$, and at the other $\psi \leq -\eps < \delta$. Since $\psi$ is continuous, the intermediate value theorem yields a point $z_i \in A_i$ with $\psi(z_i) = \delta$. Because $\abs{\psi} \geq \eps > \delta$ at the endpoints, $z_i$ lies in the interior of $A_i$. As the interiors of the arcs are pairwise disjoint, the points $z_1, \ldots, z_p$ are distinct, so $\Gamma$ contains at least $p$ points at which $\psi = \delta$. Summing over the connected components of $C$ shows that, for every $\delta \in (0, \eps)$,
\begin{equation*}
  \abs{\left\{x\in C \suchthat g(x) = 0\right\}} \leq \abs{\left\{x\in C \suchthat \psi(x) = \delta\right\}}.
\end{equation*}
By Sard's theorem, we can choose $\delta \in (0, \eps)$ to be a regular value of $\psi|_C$.
\end{proof}

The \kh{}--Rolle Lemma, Lemma~\ref{lem:kho-rolle}, and its consequence, Corollary~\ref{cor:kho-rolle}, are general results that do not 
require our functions to be Pfaffian. The Pfaffian structure enters through the following technical lemma, 
which bounds the degree of the determinant arising in the \kh{}--Rolle step.

\begin{lemma}[Rolle determinant degree]\label{lem:rolle-det-degree}
Let $\boldq = (q_1,\dots,q_{s})$ be a Pfaffian chain in $\R^n$ of degree $\alpha$, with chain polynomials $P_{i,j}$ and where the chain depends on $k\leq n$ of the variables. Suppose that in addition, we are given:
\begin{enumerate}
\item Functions $h_{s,j} := P_{s,j}(x_1,\dots,x_k, q_1(x),\dots,q_{s-1}(x), x_{n+1})$ for $j \in [k]$;
\item Polynomials $Q_1,\dots, Q_n\in \R[X_1,\dots,X_n,Y_1,\dots,Y_s]$ of degree $\deg Q_i\leq \beta_i$ and an associated map
$G = (g_1,\dots,g_n)\colon \R^{n+1}\to \R^n$ defined by
\begin{equation*}
  g_i(x, x_{n+1}) = Q_i(x, q_1(x),\dots,q_{s-1}(x), x_{n+1}), \qquad i\in [n].
\end{equation*}
\end{enumerate}
Let $M$ be the $(n+1)\times (n+1)$ matrix $M = [\nabla g_1, \dots, \nabla g_n, \eta]$, where
\begin{equation*}
  \eta(x, x_{n+1}) = \bigl(h_{s,1},\dots,h_{s,k}, 0,\dots,0, -1\bigr)^{\trans}.
\end{equation*}
Then $\det(M)$ is a Pfaffian function with format $(\alpha, \beta_{n+1}, s-1)$ and chain $(q_1,\dots,q_{s-1})$, where
\begin{equation}\label{eqn:rolle-det-degree}
  \beta_{n+1} \leq \sum_{i=1}^n (\beta_i - 1) + \min\{k, s\}\alpha.
\end{equation}
\end{lemma}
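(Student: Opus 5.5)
Throughout, $x_{n+1}$ plays the role of the variable $Y_s$: both $\eta$ and the $g_i$ are polynomial expressions in $x_1,\dots,x_{n+1}$ and $q_1(x),\dots,q_{s-1}(x)$. Hence every entry of $M$, and so $\det(M)$, is a polynomial in $(x,x_{n+1},q_1,\dots,q_{s-1})$, i.e.\ a Pfaffian function on $\R^{n+1}$ with chain $(q_1,\dots,q_{s-1})$ (which depends only on $x_1,\dots,x_k$) and chain degree $\alpha$. The plan is to bound the degree of this representing polynomial by expanding the determinant along the last column $\eta$ and tracking degrees entrywise.

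First I would compute the entries of $\nabla g_i$ via the chain rule. For $j\le n$,
\[
\frac{\partial g_i}{\partial x_j} = \frac{\partial Q_i}{\partial X_j} + \sum_{l=1}^{s-1}\frac{\partial Q_i}{\partial Y_l}\,P_{l,j}(x,q_1,\dots,q_l),
\]
with $\partial q_l/\partial x_j=0$ for $j>k$. So for $j>k$ the entry is just $\partial Q_i/\partial X_j$, of degree at most $\beta_i-1$. For $j\le k$ it has degree at most $\beta_i-1+\alpha$. The last entry $\partial g_i/\partial x_{n+1}=\partial Q_i/\partial Y_s$ has degree at most $\beta_i-1$. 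The entries of $\eta$ have degree at most $\alpha$ in the first $k$ rows, and are the constants $0$ and $-1$ in the remaining rows.

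Next, expand $\det M$ as a sum over permutations. Each term picks one entry from each column, one per row. The $i$-th column contributes degree at most $\beta_i-1$, plus an extra $\alpha$ exactly when the chosen row lies in $\{1,\dots,k\}$; the column $\eta$ contributes $\alpha$ if its row is among $1,\dots,k$ and $0$ otherwise. Since the rows are distinct, at most $k$ columns pick a row in $[k]$. This gives the bound $\sum_i(\beta_i-1)+k\alpha$.

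The main obstacle is improving $k\alpha$ to $\min\{k,s\}\alpha$ when $s<k$. Here I would not count row by row, but use that the extra $\alpha$ in a term comes from chain derivatives $P_{l,j}$ with $l\le s-1$, or from $\eta$ (which uses $l=s$). Write column $i$ as $u_i+\sum_{l=1}^{s-1}(\partial Q_i/\partial Y_l)\,w_l$, where $u_i$ is the vector of partial derivatives of $Q_i$ in $X_1,\dots,X_n,Y_s$ evaluated along the chain, and $w_l=(P_{l,1},\dots,P_{l,k},0,\dots,0)^\trans$ is independent of $i$. Similarly $\eta=w_s-e_{n+1}$. By multilinearity, $\det M$ is a sum of determinants in which each column is either some $u_i$ or a scalar multiple of some $w_l$. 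Any such determinant in which the same $w_l$ appears twice vanishes. Hence each surviving term involves at most $s$ distinct chain vectors (the $w_1,\dots,w_{s-1}$ together with $w_s$ from $\eta$), and also at most $k$ of them, since the $w_l$ are supported on the first $k$ coordinates. Each $w_l$ column costs degree $\alpha$ beyond $\beta_i-1$, while the $e_{n+1}$ and $u_i$ choices cost nothing extra. Summing gives $\deg\le\sum_i(\beta_i-1)+\min\{k,s\}\alpha$, which is \eqref{eqn:rolle-det-degree}.
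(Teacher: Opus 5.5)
Your argument is correct, but you organize the computation differently from the paper. The paper first bounds every minor $M_{J,I}$ of the upper-left $n\times n$ block. It writes the minor as a low-rank update $A+UV^\trans$ and uses the block identity $\det(A+UV^\trans)=\det\bigl(\begin{smallmatrix}A & -U\\ V^\trans & I_{s-1}\end{smallmatrix}\bigr)$. It then Laplace-expands along $\eta$ and treats two cases separately:
\begin{itemize}
\item the entry $-1$, whose cofactor has degree at most $\sum_i(\beta_i-1)+\min\{k,s-1\}\alpha$;
\item the entries $h_{s,j}$, whose cofactors $\widehat{M}_j$ lose a row of $U$ and give $\min\{k-1,s-1\}\alpha$, with a further $\alpha$ coming from $h_{s,j}$ itself.
\end{itemize}

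You instead expand all $n+1$ columns multilinearly and note that $\eta=w_s-e_{n+1}$ is just one more chain vector. A single count then suffices: the alternating property allows each of $w_1,\dots,w_s$ at most once, and their support in the first $k$ coordinates allows at most $k$ of them.

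The underlying combinatorics is the same, since picking an entry of $-U$ in the block Leibniz expansion corresponds to picking a $w_l$ column. Your version avoids the case split and the identity $\min\{k-1,s-1\}+1=\min\{k,s\}$, and it makes clear why the last column counts as the $s$-th chain direction. The paper's route gives, as a by-product, degree bounds for all minors, but these are not used later.

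One point is worth saying explicitly. The multilinear expansion, and the vanishing of terms with a repeated $w_l$ or with more than $k$ chain columns, should be carried out in the polynomial ring $\R[X_1,\dots,X_{n+1},Y_1,\dots,Y_{s-1}]$, that is, on the representing polynomial, because the degree in the format refers to that polynomial. This is legitimate because determinants are multilinear and alternating over any commutative ring.
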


Figure~\ref{fig:rolle-det} illustrates the geometric content of this construction.

\begin{figure}[ht]
  \centering
  \begin{tikzpicture}[>=stealth, scale=1.35,
      leaf/.style={black!25, thin},
      tanleaf/.style={black!45, thin},
      dot/.style={circle, fill, inner sep=1.5pt},
      tdot/.style={diamond, fill=BrickRed, inner sep=1.6pt}]
    \clip (-1.85,-0.2) rectangle (3.45,3.95);
    % foliation: leaves x2 = c e^{x1} of the chain equation
    \foreach \c/\dmax in {0.05/2.05, 0.12/2.05, 0.55/1.86, 1.8/0.67, 6.8/-0.66}
      \draw[leaf] plot[domain=-1.65:\dmax, samples=50] (\x, {\c*exp(\x)});
    % the two leaves tangent to C (slightly darker)
    \draw[tanleaf] plot[domain=-1.65:2.05, samples=50] (\x, {0.3115*exp(\x)});
    \draw[tanleaf] plot[domain=-1.65:-0.09, samples=50] (\x, {3.8898*exp(\x)});
    % N = the leaf c = 1 (graph of q_s)
    \draw[RoyalBlue, very thick] plot[domain=-1.65:1.26, samples=60] (\x, {exp(\x)});
    \node[RoyalBlue, right] at (1.30,3.42) {$N$};
    \node[black!55, right] at (-1.8,3.72) {\small leaves $x_2 = c\,\econst^{x_1}$};
    % the algebraic tangency locus {det M = 0}
    \draw[dashed, thick, BrickRed] plot[domain=0.2:2.7, samples=60] ({0.5-\x*(\x-1.9)}, \x);
    \node[BrickRed, right] at (1.0,0.28) {$\{\det(M) = 0\}$};
    % the lifted curve C = {g_1 = 0}
    \draw[very thick] (0.5,1.9) circle (1.1);
    \node at (2.5,1.62) {$C = \{g_1 = 0\}$};
    % roots of g|_C (intersections with N)
    \node[dot] (r1) at (-0.0525,0.9488) {};
    \node[dot] (r2) at (1.0485,2.8535) {};
    \node[above right=-0pt] at (r2) {$C \cap N$};
    \node[below left=-3pt] at (-0.25,1.0) {$C \cap N$};
    % tangency points (zeros of det M on C) with the normal field eta
    \node[tdot] (t1) at (1.3432,1.1936) {};
    \node[tdot] (t2) at (-0.5111,2.3333) {};
    \draw[->, thick, BrickRed] (t1.center) -- +(0.422,-0.353) node[right=-2pt] {$\eta$};
    \draw[->, thick, BrickRed] (t2.center) -- +(0.505,-0.217) node[below right=-3pt] {$\eta$};
  \end{tikzpicture}
  \caption{Lemma~\ref{lem:rolle-det-degree} for $n = 1$ and $s = 1$ with chain $(q_1) = (\econst^{x_1})$, so that $P_{1,1}(X, Y_1) = Y_1$. The chain equation defines a foliation of the $(x_1, x_2)$-plane by the solution curves $x_2 = c\,\econst^{x_1}$ of $\partial x_2/\partial x_1 = P_{1,1}(x_1, x_2)$ (grey), of which the graph $N = \{x_2 = q_1(x_1)\}$ is the leaf with $c = 1$. The vector $\eta = (h_{1,1}, -1) = (x_2, -1)$ is normal to the leaf through each point of the plane, and coincides with $\nabla g$, for $g = q_1(x_1) - x_2$, on $N$. The roots of $g|_C$ on the curve $C = \{g_1 = 0\}$ (here, a circle) are the points of $C \cap N$ (dots). Between consecutive roots, $C$ becomes tangent to a leaf of the foliation (diamonds) at points where $\det(M) = \det([\nabla g_1, \eta])=0$ on $C$. This locus is defined by a Pfaffian function of chain order $s - 1$, whereas $\nabla g$ involves the chain function $q_1$ itself.}
  \label{fig:rolle-det}
\end{figure}

\begin{proof}[Proof of Lemma~\ref{lem:rolle-det-degree}]
The functions $g_1, \ldots, g_n$ are Pfaffian with formats $(\alpha, \beta_i, s-1)$ and share the common chain $(q_1, \ldots, q_{s-1})$. The functions $h_{s,j}$
are also Pfaffian with chain $(q_1,\dots,q_{s-1})$ by construction. Writing out $M$ explicitly:
\begin{align*}
  M(x, x_{n+1}) = \begin{bmatrix}
  \frac{\partial g_1}{\partial x_1} & \cdots & \frac{\partial g_n}{\partial x_1} & h_{s,1} \\[4pt]
  \vdots & \ddots & \vdots & \vdots \\[4pt]
  \frac{\partial g_1}{\partial x_k} & \cdots & \frac{\partial g_n}{\partial x_k} & h_{s,k} \\[4pt]
  \frac{\partial g_1}{\partial x_{k+1}} & \cdots & \frac{\partial g_n}{\partial x_{k+1}} & 0 \\[4pt]
  \vdots & \ddots & \vdots & \vdots \\[4pt]
  \frac{\partial g_1}{\partial x_n} & \cdots & \frac{\partial g_n}{\partial x_n} & 0 \\[4pt]
  \frac{\partial Q_1}{\partial y_s} & \cdots & \frac{\partial Q_n}{\partial y_s} & -1
  \end{bmatrix},
\end{align*}
where each $\frac{\partial g_i}{\partial x_j}$ (for $j \in [n]$) is
\begin{equation}\label{eq:expansion}
  \frac{\partial Q_i}{\partial x_j} + \sum_{l=1}^{s-1} \frac{\partial Q_i}{\partial y_l}\, P_{l,j}.
\end{equation}
All entries of $M$ are Pfaffian functions with chain $(q_1, \ldots, q_{s-1})$, so $\det(M)$ is as well (by Lemma~\ref{le:pfaffalgebra}).

It remains to bound the degree $\beta_{n+1}$ of $\det(M)$. We first bound the degree of $t\times t$ submatrices formed from the first $n$ rows and columns (i.e., excluding the last row and column), and then handle the full determinant via Laplace expansion. 

Let $J=\{j_1,\dots,j_t\}$ be a subset of rows and let $I=\{i_1,\dots,i_t\}$ be a subset of columns, with $t\in [n]$, and denote by $M_{J,I}$ the corresponding submatrix. Using 
the expansion~\eqref{eq:expansion}, we see that $M_{J,I}$ can 
be written as
\begin{equation*}
  M_{J,I} = A + UV^\trans,
\end{equation*}
where $A\in \R^{t\times t}$, $U\in \R^{t\times (s-1)}$, and $V\in \R^{t\times (s-1)}$ have entries given by
\begin{equation*}
  A_{ab} = \frac{\partial Q_{i_b}}{\partial x_{j_a}}, \ U_{a\ell} = P_{\ell,j_a}, \ V_{b\ell} = \frac{\partial Q_{i_b}}{\partial y_\ell} \qquad \text{for} \qquad a, b \in [t], \ell \in [s-1].
\end{equation*}
Setting $\gamma = \min \{t,s-1\}$, we can expand the determinant of $M_{J,I}$ as
\begin{equation*}
 \det(M_{J,I}) = \det(A+UV^\trans) = \det\begin{pmatrix} A & -U \\ V^{\trans} & I_{s-1}\end{pmatrix},
\end{equation*}
where $I_{s-1}$ is the $(s-1)\times (s-1)$ identity matrix. In the Leibniz expansion of this block determinant, the entries taken from the first $t$ columns (belonging to $A$ or $V^{\trans}$) contribute a total degree of at most $\sum_{i\in I}(\beta_i - 1)$, while at most $\min\{t, s-1\}$ entries of degree $\alpha$ can be taken from $-U$, since $U$ has only $t$ rows; the remaining entries come from $I_{s-1}$. Hence
\begin{equation}\label{eqn:deg-bound-submatrix}
  \deg \det(M_{J,I})\leq \sum_{i\in I} (\beta_i-1)+\min\{t,s-1\}\alpha.
\end{equation}

To bound $\deg(\det(M))$, we perform Laplace expansion along the last column $\eta$. Since $\eta$ has zero entries in rows $k+1,\dots,n$, only two types of terms contribute: the last row (entry $-1$), and rows $j\in [k]$ (entries $h_{s,j}$, each of degree~$\alpha$).

The cofactor of the last row is $\pm\det(A + UV^\trans)$ with $t = n$. Since $P_{\ell,j} = 0$ for $j > k$, the matrix $U$ has at most $k$ nonzero rows, so the block determinant expansion gives degree at most $\sum_{i=1}^n(\beta_i-1) + \min\{k,s-1\}\alpha$.

For $j\in [k]$, the cofactor of $h_{s,j}$ is $\pm\det(\widehat{M}_j)$, where $\widehat{M}_j$ is the $n\times n$ matrix obtained from the first $n$ columns of $M$ by replacing row $j$ with the last row $(\frac{\partial Q_1}{\partial y_s},\dots,\frac{\partial Q_n}{\partial y_s})$. Extending the decomposition to include~$y_s$, we write $\widehat{M}_j = \widehat{A}_j + \widehat{U}_j\widetilde{V}^\trans$, where $\widetilde{V}\in \R^{n\times s}$ extends $V$ by the column $(\frac{\partial Q_i}{\partial y_s})_{i\in [n]}$, the matrix $\widehat{A}_j$ equals $A$ with row $j$ replaced by zeros, and $\widehat{U}_j\in \R^{n\times s}$ is obtained from $U$ (extended by a zero column) by replacing row $j$ with $e_s^\trans$. In the block matrix $\bigl(\begin{smallmatrix} \widehat{A}_j & -\widehat{U}_j \\ \widetilde{V}^\trans & I_s\end{smallmatrix}\bigr)$, row $j$ reduces to $(0,\dots,0,-1)$; expanding along it and then along the resulting last row shows that $\deg\det(\widehat{M}_j) \le \sum_{i=1}^n(\beta_i - 1) + \min\{k-1,s-1\}\alpha$, since $U$ with row $j$ deleted has at most $k-1$ nonzero rows. Including the factor $h_{s,j}$ of degree $\alpha$, and using $\min\{k-1,s-1\}+1 = \min\{k,s\}$, each such term has degree at most $\sum_{i=1}^n(\beta_i-1)+\min\{k,s\}\alpha$. Since $\min\{k,s-1\} \le \min\{k,s\}$, taking the maximum over both types of terms yields
\begin{equation*}
  \beta_{n+1} = \deg(\det(M)) \le \sum_{i=1}^n (\beta_i - 1) + \min\{k, s\}\alpha,
\end{equation*}
which establishes~\eqref{eqn:rolle-det-degree}.
\end{proof}

\section{Proof of \khs{} Theorem}
\label{sec:proof}

For convenience, we restate Theorem~\ref{thm:khovanskii} here.

\getkeytheorem{thm-khovanskii}

\begin{proof}
We prove the theorem by induction on the chain order $s$. In the base case $s=0$, every $f_i$ is a polynomial of degree $\beta_i$, and the bound reduces to $\beta_1 \dotsm \beta_n$, which follows from Theorem~\ref{thm:real-bezout}. Now let $s \ge 1$ and assume the bound holds for all Pfaffian systems whose chain has order at most $s-1$.

\medskip
\noindent\textbf{Step 1: Lifting to $\boldsymbol{n+1}$ dimensions.}
Each component of $F$ has the form
\begin{equation*}
  f_i(x) = Q_i(x, q_1(x), \ldots, q_s(x)), \qquad Q_i \in \R[X_1, \ldots, X_n, Y_1, \ldots, Y_s].
\end{equation*}
Introduce a new variable $x_{n+1}$ that will play the role of $q_s$. Define $G = (g_1, \ldots, g_n)\colon \R^{n+1} \to \R^n$ and $g\colon \R^{n+1} \to \R$ by
\begin{align*}
  g_i(x, x_{n+1}) &= Q_i(x, q_1(x), \ldots, q_{s-1}(x), x_{n+1}), \qquad i \in [n], \\
  g(x, x_{n+1}) &= q_s(x) - x_{n+1}.
\end{align*}
Since $\frac{\partial g}{\partial x_{n+1}} = -1$, the value $0$ is a regular value of $g$ and $N = g^{-1}(0)$ is a smooth hypersurface in $\R^{n+1}$. By construction, for any $y \in \R^n$,
\begin{equation} \label{eqn:counting-in-n+1}
  \left\{x \in \R^n \suchthat F(x) = y\right\} = \pi_n\!\left(\left\{(x, x_{n+1}) \in N \suchthat G(x, x_{n+1}) = y\right\}\right),
\end{equation}
where $\pi_n$ denotes projection onto the first $n$ coordinates. Moreover, $(x, q_s(x))$ is a regular solution of $(G, g) = (y, 0)$ if and only if $x$ is a regular solution of $F = y$ (see Claim~\ref{claim:regular-solutions} below). Thus, it suffices to bound the number of regular solutions of $(G, g) = (0, 0)$.

\medskip
\noindent\textbf{Step 2: Applying \kh{}--Rolle.}
We first assume that $G$ is a proper map; this assumption will be removed in Claim~\ref{claim:G-proper}. Let $S$ be an arbitrary finite set of regular solutions of $(G, g) = (0, 0)$. It suffices to bound $\abs{S}$ by a quantity that depends only on $n$, $k$, and the formats of the functions involved. At each point of $S$ the differential of $(G, g)\colon \R^{n+1}\to \R^{n+1}$ is invertible, so by the inverse function theorem we may choose pairwise disjoint open neighbourhoods of the points of $S$ on which $(G, g)$ restricts to a diffeomorphism onto an open neighbourhood of $(0, 0)$. Consequently, for every $y \in \R^n$ sufficiently close to $0$, the system $(G, g) = (y, 0)$ has at least $\abs{S}$ distinct solutions. By Sard's theorem, we may in addition choose such a $y$ that is a regular value of both $G$ and of $G|_N$. Since $G$ is proper, the set $C = G^{-1}(y) \subset \R^{n+1}$ is then a smooth, compact $1$-manifold, and
\begin{equation*}
  \left\{(x, x_{n+1}) \in N \suchthat G(x, x_{n+1}) = y\right\} = \left\{(x, x_{n+1}) \in C \suchthat g(x, x_{n+1}) = 0\right\},
\end{equation*}
so that
\begin{equation} \label{eqn:S-vs-roots}
  \abs{S} \le \abs{\left\{(x, x_{n+1}) \in C \suchthat g(x, x_{n+1}) = 0\right\}}.
\end{equation}
Moreover, every root of $g|_C$ is non-degenerate. Indeed, let $z$ be such a root. Since $y$ is a regular value of $G$, we have $T_z C = \ker \diff{G}(z)$, and since $y$ is a regular value of $G|_N$, the restriction of $\diff{G}(z)$ to the $n$-dimensional subspace $T_z N$ is surjective, hence injective, so that $T_z N \cap \ker \diff{G}(z) = \{0\}$. As $T_z N = \ker \diff{g}(z)$, it follows that $\diff{(g|_C)}(z) = \diff{g}(z)|_{T_z C} \neq 0$.

Define the $(n+1) \times (n+1)$ matrices
\begin{equation} \label{eqn:matrix-J}
  J(x, x_{n+1}) := \left[\nabla g_1 \enspace \cdots \enspace \nabla g_n \enspace \nabla g \right], \qquad
  M(x, x_{n+1}) := \left[\nabla g_1 \enspace \cdots \enspace \nabla g_n \enspace \eta \right],
\end{equation}
where $\eta$ is defined in Lemma~\ref{lem:rolle-det-degree}, so that $\det(J)$ is the determinant~\eqref{eq:kho-rolle-det} associated with $G$ and $g$, and $M$ is the matrix of Lemma~\ref{lem:rolle-det-degree}. The two matrices differ only in the last column: since the chain functions depend only on $x_1, \ldots, x_k$, we have
\begin{equation*}
  \nabla g = \bigl(P_{s,1}(x_1, \ldots, x_k, q_1(x), \ldots, q_{s}(x)), \ldots, P_{s,k}(x_1, \ldots, x_k, q_1(x), \ldots, q_{s}(x)), 0, \ldots, 0, -1\bigr)^{\trans},
\end{equation*}
and $\eta$ is obtained from $\nabla g$ by substituting the free variable $x_{n+1}$ for $q_s(x)$ in the last argument of each chain polynomial. Off the hypersurface $N$, where $x_{n+1} \neq q_s(x)$, the determinants $\det(J)$ and $\det(M)$ therefore differ in general. On $N$, however, they coincide. More precisely,
\begin{equation} \label{eqn:detJ-detM}
  \det(J) = \det(M) + g \cdot E
\end{equation}
for a smooth function $E \colon \R^{n+1} \to \R$. To see this, note that the $j$-th entry of $\nabla g - \eta$ is zero for $j \notin [k]$, while for $j \in [k]$ it equals
\begin{equation*}
  P_{s,j}(x_1, \ldots, x_k, q_1(x), \ldots, q_{s-1}(x), q_s(x)) - P_{s,j}(x_1, \ldots, x_k, q_1(x), \ldots, q_{s-1}(x), x_{n+1}),
\end{equation*}
which, viewing $P_{s,j}$ as a polynomial in its last argument, factors as $(q_s(x) - x_{n+1})\, r_j(x, x_{n+1}) = g \cdot r_j$ for a smooth function $r_j$. Hence $\nabla g - \eta = g \cdot w$ with $w = (r_1, \ldots, r_k, 0, \ldots, 0)^{\trans}$, and by multilinearity of the determinant in the last column,
\begin{equation*}
  \det(J) - \det(M) = \det([\nabla g_1, \ldots, \nabla g_n, \nabla g - \eta]) = g \cdot \det([\nabla g_1, \ldots, \nabla g_n, w]),
\end{equation*}
which proves~\eqref{eqn:detJ-detM}.

By Corollary~\ref{cor:kho-rolle}, applied with $\psi = \det(M)$ (the hypothesis that $\psi$ coincides with the determinant~\eqref{eq:kho-rolle-det} on $\{g = 0\} = N$ holds by~\eqref{eqn:detJ-detM}) there exists a regular value $\delta > 0$ of $\det(M)|_C$ such that
\begin{multline} \label{eqn:new-system}
  \abs{\left\{(x, x_{n+1}) \in C \suchthat g(x, x_{n+1}) = 0\right\}} \\
  \le \abs{\left\{(x, x_{n+1}) \in C \suchthat \det(M(x, x_{n+1})) = \delta\right\}}.
\end{multline}
The right-hand side of~\eqref{eqn:new-system} is the number of solutions of the system
\begin{equation} \label{eqn:lifted-system}
  g_1(x, x_{n+1}) = y_1, \quad \ldots, \quad g_n(x, x_{n+1}) = y_n, \quad \det(M(x, x_{n+1})) = \delta,
\end{equation}
which consists of $n+1$ equations in $n+1$ unknowns, and all of these solutions are regular: at any solution $z$, the kernel of $\diff{G}(z)$ is the line $T_z C$, on which the differential of $\det(M)$ does not vanish, because $\delta$ is a regular value of $\det(M)|_C$. The key observation is that this new system is Pfaffian with chain order $s-1$, enabling the induction.

\medskip
\noindent\textbf{Step 3: Format of the new system.}
Each $g_i$ is obtained from $f_i$ by replacing $q_s(x)$ with the free variable $x_{n+1}$. Therefore, $g_1, \ldots, g_n$ are Pfaffian functions of formats $(\alpha, \beta_i, s-1)$ with respect to the common chain $(q_1, \ldots, q_{s-1})$. The last equation of~\eqref{eqn:lifted-system} involves $\det(M)$, whose last column $\eta$ belongs to the chain $(q_1, \ldots, q_{s-1})$ by construction --- in contrast to $\det(J)$, whose last column $\nabla g$ contains the chain polynomials evaluated at $q_s(x)$, and which is therefore only Pfaffian of order $s$. This is precisely why Corollary~\ref{cor:kho-rolle} was applied with the comparison function $\psi = \det(M)$ rather than with $\det(J)$ itself. By Lemma~\ref{lem:rolle-det-degree}, $\det(M)$ is Pfaffian with format $(\alpha, \beta_{n+1}, s-1)$ belonging to the chain $(q_1, \ldots, q_{s-1})$, where
\begin{equation} \label{eqn:beta-n+1-ub}
  \beta_{n+1} \le \sum_{i=1}^n (\beta_i - 1) + \min\{k, s\}\alpha.
\end{equation}

\medskip
\noindent\textbf{Step 4: Induction.}
Applying the induction hypothesis to the system~\eqref{eqn:lifted-system}, which has $n+1$ Pfaffian equations in $n+1$ unknowns with chain order $s-1$ and the chain still depending on $k$ variables, gives
\begin{align}
  \eqref{eqn:new-system} &\le 2^{\frac{(s-1)(s-2)}{2}} \left(\prod_{i=1}^{n+1} \beta_i\right) \left(\sum_{i=1}^{n+1}\beta_i - (n+1) + \min\{k,s-1\}\alpha + 1\right)^{s-1} \nonumber \\
  &= 2^{\frac{(s-1)(s-2)}{2}} \left(\prod_{i=1}^{n} \beta_i\right) \cdot \pool, \label{eqn:new-system-upto-A}
\end{align}
where
\begin{equation*}
  \pool := \beta_{n+1} \left(\sum_{i=1}^{n}\beta_i + \beta_{n+1} - n + \min\{k,s-1\}\alpha\right)^{s-1}.
\end{equation*}
Using~\eqref{eqn:beta-n+1-ub} from Lemma~\ref{lem:rolle-det-degree}, we bound
\begin{align}
  \pool &\le \beta_{n+1} \left(\sum_{i=1}^{n}\beta_i + \sum_{i=1}^n (\beta_i - 1) + \min\{k, s\}\alpha - n + \min\{k,s-1\}\alpha\right)^{s-1} \nonumber \\
  &\le \beta_{n+1} \cdot 2^{s-1}\left(\sum_{i=1}^{n}\beta_i - n + \min\{k,s\}\alpha\right)^{s-1} \nonumber \\
  &\le 2^{s-1}\left(\sum_{i=1}^{n}\beta_i - n + \min\{k,s\}\alpha\right)^{s}, \label{eqn:simplify-A-bound}
\end{align}
where in the second line we used $2\sum_{i=1}^n\beta_i - 2n + (\min\{k,s\}+\min\{k,s-1\})\alpha \le 2(\sum_{i=1}^n\beta_i - n + \min\{k,s\}\alpha)$ (since $\min\{k,s-1\}\le \min\{k,s\}$) and the bound $\beta_{n+1} \le \sum_{i=1}^n\beta_i - n + \min\{k,s\}\alpha$ in the last line.

Combining~\eqref{eqn:simplify-A-bound} with~\eqref{eqn:new-system-upto-A} yields
\begin{equation} \label{eqn:sharper-kho-proper}
  \eqref{eqn:new-system} \le 2^{\frac{s(s-1)}{2}}\left(\prod_{i=1}^n \beta_i \right)\left(\sum_{i=1}^{n}\beta_i - n + \min\{k,s\}\alpha\right)^s,
\end{equation}
which is a sharper bound than claimed (the right-hand side of~\eqref{eqn:sharper-kho-proper} omits the $+1$ in the exponent base). By~\eqref{eqn:S-vs-roots} and~\eqref{eqn:new-system}, the same quantity bounds $\abs{S}$. Since $S$ was an arbitrary finite set of regular solutions of $(G, g) = (0, 0)$, and by Step~1 these correspond bijectively to the regular solutions of $F(x) = 0$, the number of regular solutions of $F(x) = 0$ is bounded by the right-hand side of~\eqref{eqn:sharper-kho-proper}. This completes the proof under the assumption that $G$ is proper.
\end{proof}

\begin{claim} \label{claim:regular-solutions}
  A point $x \in \R^n$ is a regular solution of $F(x) = y$ if and only if $(x, q_s(x))$ is a regular solution of $(G, g)(x, x_{n+1}) = (y, 0)$.
  \end{claim}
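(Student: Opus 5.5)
The argument reduces to comparing two Jacobian determinants via a column operation.

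First, the solution sets correspond. If $F(x)=y$, then $(x,q_s(x))$ lies on $N$, so $g=0$ there. Moreover $G(x,q_s(x))=F(x)=y$, since $g_i(x,q_s(x))=f_i(x)$ by construction. Conversely, any solution $(x,x_{n+1})$ of $(G,g)=(y,0)$ has $x_{n+1}=q_s(x)$, hence $F(x)=y$. It therefore remains to compare regularity at corresponding points.

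Fix $z=(x,q_s(x))$. The Jacobian of $(G,g)$ at $z$ is the $(n+1)\times(n+1)$ matrix with rows $\nabla g_1,\dots,\nabla g_n,\nabla g$. By the chain rule applied to $f_i(x)=g_i(x,q_s(x))$,
\begin{equation*}
  \frac{\partial f_i}{\partial x_j}(x) = \frac{\partial g_i}{\partial x_j}(z) + \frac{\partial g_i}{\partial x_{n+1}}(z)\,\frac{\partial q_s}{\partial x_j}(x), \qquad j\in[n].
\end{equation*}
The last row of the Jacobian is $\nabla g(z)=(\nabla q_s(x),-1)$. For each $j\in[n]$, add $\frac{\partial q_s}{\partial x_j}(x)$ times the last column (the $x_{n+1}$-column) to the $j$-th column. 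This operation leaves the determinant unchanged. Afterwards the last row becomes $(0,\dots,0,-1)$, and the top-left $n\times n$ block becomes exactly $\diff F(x)$, by the displayed chain-rule identity.

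Expanding along the last row gives
\begin{equation*}
  \det \diff(G,g)(z)=-\det \diff F(x).
\end{equation*}
Since both systems are square, maximal rank means a nonzero determinant, and the equivalence follows.

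There is no real obstacle here. The only care needed is bookkeeping: the derivative of $q_s$ must appear only through $\nabla g$ and through the chain rule, and we must not invoke the chain polynomials $P_{s,j}$ at all, since smoothness of $q_s$ is all that is used.
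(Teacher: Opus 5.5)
Your proof is correct and is essentially the paper's argument in matrix form: the paper restricts $\diff{G}(z)$ to $T_zN=\ker \diff{g}(z)$, parametrizes this space by $(v_1,\dots,v_n)$ via $v_{n+1}=\sum_j \frac{\partial q_s}{\partial x_j}(x)\,v_j$, and checks that $\diff{g_i}(z)(v)=\diff{f_i}(x)(v_1,\dots,v_n)$; your column operation, i.e.\ right multiplication by $\bigl(\begin{smallmatrix} I_n & 0\\ \nabla q_s(x)^{\trans} & 1\end{smallmatrix}\bigr)$, encodes exactly this. Your version has the minor advantage of giving the explicit identity $\det \diff{(G,g)}(z)=-\det \diff{F}(x)$. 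That identity makes automatic a step the paper leaves implicit, namely that $\diff{(G|_N)}(z)$ having rank $n$ is equivalent to $\diff{(G,g)}(z)$ being invertible.
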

  
\begin{proof}
By the chain rule, for each $i \in [n]$ and $j \in [n]$,
\begin{align*}
  \frac{\partial g_i}{\partial x_j}(x, x_{n+1}) &= \frac{\partial Q_i}{\partial x_j}(x, q_1(x), \ldots, q_{s-1}(x), x_{n+1}) \\
  &\qquad + \sum_{l=1}^{s-1} \frac{\partial Q_i}{\partial y_l}(x, q_1(x), \ldots, q_{s-1}(x), x_{n+1})\cdot P_{l,j}(x, q_1(x), \ldots, q_l(x)),
\end{align*}
and
\begin{equation*}
  \frac{\partial g_i}{\partial x_{n+1}}(x, x_{n+1}) = \frac{\partial Q_i}{\partial y_s}(x, q_1(x), \ldots, q_{s-1}(x), x_{n+1}).
\end{equation*}
On the hypersurface $N = \{g = 0\}$, i.e.\ at points $(x, q_s(x))$, the tangent space is
\begin{equation*}
  T_{(x, q_s(x))} N = \ker \diff{g}(x, q_s(x)) = \left\{v \in \R^{n+1} \suchthat v_{n+1} = \sum_{j=1}^n P_{s,j}(x, q_1(x), \ldots, q_s(x))\, v_j\right\}.
\end{equation*}
A direct calculation shows that for $v \in T_{(x, q_s(x))} N$,
\begin{equation*}
  \diff{g_i}(x, q_s(x))(v) = \diff{f_i}(x)(v_1, \ldots, v_n).
\end{equation*}
It follows that the $n \times n$ matrix $[\diff{f_1}(x), \ldots, \diff{f_n}(x)]$ is nonsingular if and only if $\diff{(G|_N)}(x, q_s(x))$ has rank $n$. Hence $(x, q_s(x))$ is a regular solution of $(G, g) = (y, 0)$ if and only if $x$ is a regular solution of $F = y$.
\end{proof}

\begin{claim} \label{claim:G-proper}
  The properness assumption on $G$ can be removed, and in doing so we obtain the bound stated in Theorem~\ref{thm:khovanskii}.
\end{claim}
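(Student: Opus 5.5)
The plan is Khovanski\u{\i}'s standard compactification trick: add one extra equation to the system which makes the curve compact without destroying any of the solutions in $S$. The price is an extra degree in one equation, and this is where the ``$+1$'' in the final bound will come from.

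Fix a finite set $S$ of regular solutions of $F=0$; it suffices to bound $\abs{S}$. Choose $R>0$ so large that $S$ lies in the open ball $\{\norm{x}^2<R\}$. Replace $f_n$ by the product
\begin{equation*}
\tilde f_n(x):=f_n(x)\,\bigl(R-\norm{x}^2\bigr).
\end{equation*}
At a point $x\in S$ we have $\diff{\tilde f_n}(x)=(R-\norm{x}^2)\,\diff{f_n}(x)$, since $f_n(x)=0$. The factor $R-\norm{x}^2$ is nonzero there, so every point of $S$ remains a regular solution of the modified system. The function $\tilde f_n$ has format $(\alpha,\beta_n+2,s)$ with the same chain, and the chain still depends only on $x_1,\dots,x_k$.

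Simply changing $f_n$, however, does not by itself make the lifted map $G$ proper. The cleaner route is to perturb the target as well. Since the points of $S$ are regular, a small perturbation $y$ of $0$ keeps $\abs{S}$ regular solutions nearby. The curve to be controlled is then
\begin{equation*}
C=\{(x,x_{n+1}) : g_i=y_i,\ i<n,\ \text{and one bounding equation}\}.
\end{equation*}
Concretely, I would run Steps 2--4 for the lifted system on the compact region cut out by the sphere $\norm{x}^2+x_{n+1}^2=R'$. One option is to take $g_n$ to be the lifted version of $f_n$ times a sphere factor in $n+1$ variables. The other is to work with the compact sublevel set and replace the last coordinate of $G$ by $\tilde g_n=\norm{(x,x_{n+1})}^2$ on a deformed system, so that the level set of $(g_1,\dots,g_{n-1},\tilde g_n)$ is a compact curve, closed and bounded inside a sphere.

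The Khovanski\u{\i}--Rolle count (Corollary~\ref{cor:kho-rolle}) only needs $C$ to be compact. With $C$ compact, Lemma~\ref{lem:rolle-det-degree} and the induction go through exactly as before. Every $\beta_n$ there is replaced by a degree that is at most $\beta_n+2$, applied to a curve of which the original solutions are a subset.

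The key bookkeeping is to arrange the degree increase so that it contributes at most $+1$ to the base
\begin{equation*}
\sum\beta_i-n+\min\{k,s\}\alpha,
\end{equation*}
and does not increase the product $\beta_1\cdots\beta_n$. This is the main obstacle. My first attempt will not multiply $f_n$ at all. Instead, I will use the proper argument with the curve $C\cap\{\norm{z}^2\le R'\}$, count roots of $g|_C$ on compact arcs, and handle the non-compact components of $C$ by a Rolle argument on closed arcs plus endpoints. Each unbounded component contributes at most one extra root beyond the Rolle pairing, and the number of such components is bounded by the number of intersection points of $C$ with a large sphere, $\{\norm{z}^2=R'\}$. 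That count is a system of format where one degree is $2$, and its Rolle determinant raises $\beta_{n+1}$ by at most $1$, which produces the $+1$ in the base. This yields the stated bound $2^{s(s-1)/2}\beta_1\cdots\beta_n(\sum\beta_i-n+\min\{k,s\}\alpha+1)^s$, absorbing the extra terms by the same elementary inequalities as in~\eqref{eqn:simplify-A-bound}.
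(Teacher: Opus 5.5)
Your final route, running the \kh{}--Rolle pairing on the compact piece $C\cap\{\norm{z}^2\le R'\}$ and paying one unpaired root per arc, is viable. It is close to \khs{} original estimate, which adds the number of non-compact pieces to the number of contact points. But the step you yourself flag as the main obstacle is asserted rather than proved, and the mechanism you give for it is wrong. (Drop the earlier detours of multiplying $f_n$ or $g_n$ by a sphere factor: they raise a degree in the product $\beta_1\cdots\beta_n$ and need not make $G^{-1}(y)$ compact.)

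First, you must count arcs, not unbounded components. Choose $R'$ to be a regular value of $\norm{z}^2|_C$. Then the non-closed components of $C\cap\{\norm{z}^2\le R'\}$ are arcs with two endpoints on the sphere, so there are exactly \emph{half} as many arcs as points in $C\cap\{\norm{z}^2=R'\}$. Bounding the arcs by the full number of intersection points, as you write, already fails for $s=1$. Put $\Lambda:=\sum_{i=1}^n\beta_i-n+\min\{k,s\}\alpha$. B\'ezout gives at most $\beta_1\cdots\beta_n\Lambda$ points with $\det(M)=\delta$ and at most $2\beta_1\cdots\beta_n$ sphere points. The total is then $\beta_1\cdots\beta_n(\Lambda+2)$, which exceeds the claimed $\beta_1\cdots\beta_n(\Lambda+1)$.

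Second, the $+1$ does not come from a Rolle determinant raising $\beta_{n+1}$. The system $G=y$, $\norm{z}^2=R'$ is a regular system of order $s-1$ with degrees $\beta_1,\dots,\beta_n,2$. The induction hypothesis bounds it directly, so half its count is at most $2^{\frac{(s-1)(s-2)}{2}}\beta_1\cdots\beta_n(\Lambda+2)^{s-1}$. Adding $2^{\frac{s(s-1)}{2}}\beta_1\cdots\beta_n\Lambda^s$ for the Rolle system, you need $2^{s-1}\Lambda^s+(\Lambda+2)^{s-1}\le 2^{s-1}(\Lambda+1)^s$. This is not one of the inequalities in~\eqref{eqn:simplify-A-bound}. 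It does hold, because $(\Lambda+1)^s-\Lambda^s\ge(\Lambda+1)^{s-1}$ and $(2\Lambda+2)^{s-1}\ge(\Lambda+2)^{s-1}$.

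You also need a version of Corollary~\ref{cor:kho-rolle} for arcs with endpoints. In it, $\delta$ must lie below $\abs{\det(M)}$ at the finitely many roots in the closed ball. On a whole non-compact component the roots need not be finite, so otherwise no uniform $\delta$ exists.

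With these repairs your argument proves the claim, but by a different route from the paper's. The paper never handles non-compact curves. It adds a variable $x_0$ and the equation $x_0^2+\norm{x}^2+q_s(x)^2=R^2$. The lift of this equation is the genuine sphere $x_0^2+\norm{x}^2+x_{n+1}^2=R^2$; the term $q_s(x)^2$ is what controls $x_{n+1}$. So the lifted map is proper, and Corollary~\ref{cor:kho-rolle} applies verbatim. Each solution in $B_\rho$ yields two regular solutions $(\pm x_0,x)$. In~\eqref{eqn:sharper-kho-proper} the factor $\beta_0=2$ cancels this doubling, while the extra variable contributes exactly $\beta_0-1=1$ to the base. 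Your route costs a boundary version of the Rolle corollary and a separate absorption inequality. In exchange it treats non-compact curves directly, in the spirit of Remark~\ref{rem:global-domain}.
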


\begin{proof}
Fix $\rho > 0$. We bound the number of regular solutions of $F(x) = 0$ contained in the open ball $B_\rho = \{x \in \R^n \suchthat \norm{x} < \rho\}$ by a quantity that does not depend on $\rho$; letting $\rho \to \infty$ then yields the same bound for the number of all regular solutions.

For $R > 0$, introduce a new variable $x_0$ and add the equation
\begin{equation*}
  f_0(x_0, x_1, \ldots, x_n) = x_0^2 + x_1^2 + \cdots + x_n^2 + q_s(x)^2 - R^2
\end{equation*}
to the system. Writing $Q_0 = X_0^2 + \sum_{i=1}^{n} X_i^2 + Y_s^2 - R^2$, we have $f_0 = Q_0(x_0, x, q_1(x), \ldots, q_s(x))$, so $f_0$ is a Pfaffian function with the same chain $(q_1, \ldots, q_s)$ and degree $\beta_0 = \deg Q_0 = 2$. The augmented system $\mathcal{S}' = \{f_0 = 0, f_1 = 0, \ldots, f_n = 0\}$ therefore consists of $n+1$ Pfaffian equations in the $n+1$ unknowns $(x_0, x_1, \ldots, x_n)$, with component-wise formats $(\alpha, \beta_i, s)$, and the chain still depends on only $k$ of the variables $\{x_0, x_1, \ldots, x_n\}$ (it is independent of $x_0$; after reordering the variables as $(x_1, \ldots, x_n, x_0)$ --- which leaves the Pfaffian structure and the formats intact, since a permutation of the variables merely permutes the chain equations~\eqref{eq:pfaffchain} --- the chain depends on the first $k$ of them, as required).

The role of the term $q_s(x)^2$ in $f_0$ is to make the lifted map proper. Indeed, the lift of Step~1, applied to $\mathcal{S}'$, replaces $q_s(x)$ by the new variable $x_{n+1}$ in each representing polynomial $Q_i$, and thus produces the map $G' = (g_0, g_1, \ldots, g_n)\colon \R^{n+2} \to \R^{n+1}$ with
\begin{equation*}
  g_0(x_0, x, x_{n+1}) = x_0^2 + x_1^2 + \cdots + x_n^2 + x_{n+1}^2 - R^2.
\end{equation*}
If $K \subset \R^{n+1}$ is compact, then $g_0$ is bounded on $(G')^{-1}(K)$, so that $(G')^{-1}(K)$ is a closed and bounded, hence compact, subset of $\R^{n+2}$; thus $G'$ is proper, and Steps~1--4 apply to the system $\mathcal{S}'$. (Note that without the term $q_s(x)^2$ the lifted coordinate $x_{n+1}$ would not be controlled by $g_0$, and $G'$ would in general fail to be proper.)

Now set $T_\rho := \max_{\norm{x} \leq \rho} \abs{q_s(x)}$ and choose $R$ with $R^2 > \rho^2 + T_\rho^2$. Every regular solution $x$ of $\{f_1 = \cdots = f_n = 0\}$ with $x \in B_\rho$ satisfies $\norm{x}^2 + q_s(x)^2 < R^2$ and therefore gives rise to exactly two solutions $(\pm x_0, x)$ of $\mathcal{S}'$, where $x_0 = \sqrt{R^2 - \norm{x}^2 - q_s(x)^2} > 0$. These solutions are regular: since $f_1, \ldots, f_n$ do not depend on $x_0$, expanding the Jacobian determinant of $(f_0, f_1, \ldots, f_n)$ at $(\pm x_0, x)$ along the column of partial derivatives with respect to $x_0$ gives $\pm 2 x_0 \det(\diff{F}(x)) \neq 0$. As distinct solutions in $B_\rho$ yield distinct pairs, the number of regular solutions of $F(x) = 0$ in $B_\rho$ is at most half the number of regular solutions of $\mathcal{S}'$.

Applying the bound~\eqref{eqn:sharper-kho-proper} (proved above under the properness assumption) to the $(n+1)$-dimensional system $\mathcal{S}'$ gives at most
\begin{equation*}
  2^{\frac{s(s-1)}{2}} \beta_0 \beta_1 \dotsm \beta_n \left(\beta_0 + \beta_1 + \dotsb + \beta_n - (n+1) + \min\{k,s\}\alpha\right)^s
\end{equation*}
regular solutions of $\mathcal{S}'$. Dividing by $2$ and setting $\beta_0 = 2$ shows that the number of regular solutions of $F(x) = 0$ in $B_\rho$ is at most
\begin{equation*}
  2^{\frac{s(s-1)}{2}} \beta_1 \dotsm \beta_n \left(\beta_1 + \dotsb + \beta_n - n + \min\{k,s\}\alpha + 1\right)^s.
\end{equation*}
This quantity depends on neither $R$ nor $\rho$. Since every regular solution of $F(x) = 0$ lies in $B_\rho$ for some $\rho > 0$, the number of regular solutions of $F(x) = 0$ is bounded by the same quantity, which is the bound stated in Theorem~\ref{thm:khovanskii}.
\end{proof}

 \section{Applications of Theorem \ref{thm:khovanskii}}
 \label{sec:applications}
 
We now prove Theorem \ref{thm:pfaffian-connected-components}, which we recall below.

\getkeytheorem{thm-pfaffian-connected-components}

The proof rests on a construction that goes back to Milnor's bound on the Betti numbers of real varieties~\cite{milnor1964betti}; see also~\cite[\S 11.5]{bochnak2013real} for the algebraic case.

 \begin{proof}
 Write $Z := \mathcal{Z}(F)$ and let $f := F_1^2 + \cdots + F_m^2$, where $F_1, \ldots, F_m$ are the components of $F$, so that $f \geq 0$ and $Z = f^{-1}(0)$. Since the $F_i$ are Pfaffian with respect to the common chain $\boldq$, it follows from Lemma~\ref{le:pfaffalgebra} and Remark~\ref{re:1} that $f \in \Pfaff_{\alpha, 2\beta, s}(\R^n)$, again with chain $\boldq$. For $R > 0$ and $\eps > 0$, define
 \begin{equation*}
   F_{\eps}(x) := f(x) + \eps\left(\norm{x}^2 - R^2\right).
 \end{equation*}
 The added term is a polynomial of degree $2 \leq 2\beta$, so $F_{\eps}$ is again Pfaffian with chain $\boldq$ and format $(\alpha, 2\beta, s)$; by Lemma~\ref{le:pfaffalgebra}(3) and Remark~\ref{re:1}, its partial derivatives belong to $\Pfaff_{\alpha, \alpha+2\beta-1, s}(\R^n)$, still with chain $\boldq$. For $c = (c_2, \ldots, c_n) \in \R^{n-1}$, consider the system of $n$ equations in $n$ unknowns
 \begin{equation}\label{eqn:crit-system}
   F_{\eps}(x) = 0, \qquad
   \frac{\partial F_{\eps}}{\partial x_j}(x) - c_j\,\frac{\partial F_{\eps}}{\partial x_1}(x) = 0,
   \quad j = 2, \ldots, n.
 \end{equation}
 The functions appearing in~\eqref{eqn:crit-system} are Pfaffian with common chain $\boldq$ and degrees $\beta_1 = 2\beta$ and $\beta_j = \alpha + 2\beta - 1$ for $j \geq 2$, and the chain depends on $k$ of the variables. By Theorem~\ref{thm:khovanskii}, the number of regular solutions of~\eqref{eqn:crit-system} is therefore at most
 \begin{equation*}
   2^{\frac{s(s-1)}{2}}\, 2\beta\, (\alpha + 2\beta - 1)^{n-1}
   \left(2\beta + (n-1)(\alpha + 2\beta - 1) - n + \min\{k, s\}\alpha + 1\right)^{s},
 \end{equation*}
 which coincides with~\eqref{eq:jones}, since $2\beta + (n-1)(\alpha+2\beta-1) - n + 1 = n(\alpha + 2\beta - 2) - \alpha + 2$. It thus suffices to show that for any $N$ distinct connected components $C_1, \ldots, C_N$ of $Z$, there are choices of $R, \eps$ and $c$ for which the system~\eqref{eqn:crit-system} has at least $N$ regular solutions.

 \medskip
 \noindent\textbf{Step 1: Separating the components inside a compact set.}
 Fix points $p_i \in C_i$ for $i \in [N]$ and choose $R > \max_i \norm{p_i}$, and let $\bar{B}_R := \{x \in \R^n \suchthat \norm{x} \leq R\}$. Consider the sets
 \begin{equation*}
  D_{\eps} := \left\{x \in \R^n \suchthat F_{\eps}(x) \leq 0\right\}
   = \left\{x \in \R^n \suchthat f(x) \leq \eps\left(R^2 - \norm{x}^2\right)\right\}.
 \end{equation*}
 Each $D_\eps$ is closed and, since $f \geq 0$ forces $F_\eps > 0$ outside $\bar{B}_R$, contained in $\bar{B}_R$; hence $D_\eps$ is compact. The sets $D_\eps$ decrease as $\eps$ decreases, with $\bigcap_{l \in \N} D_{1/l} = Z \cap \bar{B}_R$: if $x \in D_{1/l}$ for all $l$, then $f(x) \leq \frac{1}{l}(R^2 - \norm{x}^2) \to 0$, so $f(x) = 0$; the converse inclusion is clear. Let $K_i^{\eps}$ denote the connected component of $D_\eps$ containing $p_i$ (note that $F_\eps(p_i) = \eps(\norm{p_i}^2 - R^2) < 0$). For $\eps' < \eps$ we have $K_i^{\eps'} \subseteq K_i^{\eps}$, and
 \begin{equation*}
   A_i := \bigcap_{l \in \N} K_i^{1/l}
\end{equation*}
 is the intersection of a decreasing sequence of compact connected sets, and is therefore compact and connected\footnote{If $A_i = A \sqcup B$ with $A, B$ non-empty, compact and disjoint, choose disjoint open sets $U \supset A$ and $V \supset B$. The sets $K_i^{1/l} \setminus (U \cup V)$ are compact, nested, and non-empty, since a connected set meeting both $U$ and $V$ cannot be contained in $U \cup V$; a point in their intersection would lie in $A_i \setminus (U \cup V) = \emptyset$, a contradiction.}. Since $A_i \subseteq Z \cap \bar{B}_R$ is connected and contains $p_i$, we conclude that $A_i \subseteq C_i$. In particular, $p_j \notin A_i$ for $j \neq i$, so there exists $l_{ij} \in \N$ with $p_j \notin K_i^{1/l}$ for all $l \geq l_{ij}$. Setting $\eps_0 := 1/\max_{i \neq j} l_{ij}$ (with $\eps_0 := 1$ if $N = 1$), the components $K_1^{\eps}, \ldots, K_N^{\eps}$ of $D_\eps$ are pairwise distinct, and hence pairwise disjoint, for every $\eps < \eps_0$.

 \medskip
 \noindent\textbf{Step 2: A compact smooth hypersurface enclosing the components.}
 We next choose $\eps < \eps_0$ such that $0$ is a regular value of $F_\eps$, so that $X_\eps := F_\eps^{-1}(0)$ is a smooth compact hypersurface. On the open ball $B_R = \{\norm{x} < R\}$, define
 \begin{equation*}
   G := \frac{f}{R^2 - \norm{x}^2}.
 \end{equation*}
 For $x \in B_R$ we have $F_\eps(x) = 0$ if and only if $G(x) = \eps$, and at any such point a direct computation gives $\nabla F_\eps(x) = (R^2 - \norm{x}^2)\,\nabla G(x)$. By Sard's theorem, we may choose $\eps \in (0, \eps_0)$ to be a regular value of $G$; then $\nabla F_\eps$ does not vanish at any point of $X_\eps \cap B_R$. At a point $x \in X_\eps$ with $\norm{x} = R$, we have $f(x) = F_\eps(x) = 0$, so $x$ is a global minimum of $f$ and $\nabla f(x) = 0$, whence $\nabla F_\eps(x) = 2\eps x \neq 0$. Since $X_\eps \subseteq D_\eps \subseteq \bar{B}_R$, this shows that $0$ is a regular value of $F_\eps$ and that $X_\eps$ is compact.

 Write $K_i := K_i^{\eps}$. The topological boundary of $K_i$ satisfies $\emptyset \neq \partial K_i \subseteq X_\eps$. Indeed, if $x \in \partial K_i$ had $F_\eps(x) < 0$, then a connected open neighbourhood of $x$ would be contained in $D_\eps$ and meet $K_i$, hence be contained in $K_i$, making $x$ an interior point; and if $\partial K_i$ were empty, the non-empty set $K_i$ would be open and closed in $\R^n$, contradicting compactness. Now let $\Gamma_i$ be a connected component of $X_\eps$ containing a point of $\partial K_i$. Since $\Gamma_i \subseteq X_\eps \subseteq D_\eps$ and $\Gamma_i \cap K_i \neq \emptyset$, the union $\Gamma_i \cup K_i$ is a connected subset of $D_\eps$ containing $p_i$, so $\Gamma_i \subseteq K_i$. As the sets $K_1, \ldots, K_N$ are pairwise disjoint, the components $\Gamma_1, \ldots, \Gamma_N$ of $X_\eps$ are pairwise distinct.

 \medskip
 \noindent\textbf{Step 3: All solutions of the system are regular.}
 Assume $n \geq 2$; for $n = 1$, the tuple $c$ is empty, the system~\eqref{eqn:crit-system} reduces to the single equation $F_\eps = 0$, and every point of the finite set $X_\eps$ is a regular solution, since $0$ is a regular value of $F_\eps$. Set $u := (1, c_2, \ldots, c_n) \in \R^n$ and $\ell(x) := \ip{u}{x}$. We first observe that the solutions of~\eqref{eqn:crit-system} are exactly the critical points of $\ell|_{X_\eps}$. At a solution $x$, we must have $\partial F_\eps/\partial x_1(x) \neq 0$, since otherwise all partial derivatives, and hence $\nabla F_\eps(x)$, would vanish, contradicting the regularity of the value $0$; therefore $\nabla F_\eps(x) = \frac{\partial F_\eps}{\partial x_1}(x)\, u$, so that $u$ is orthogonal to $T_x X_\eps = \ker \diff{F_\eps}(x)$, which is precisely the condition for $x$ to be a critical point of $\ell|_{X_\eps}$. Conversely, at a critical point $x$ of $\ell|_{X_\eps}$, the gradient $\nabla F_\eps(x)$ is a non-zero multiple $\lambda u$ of $u$, and comparing first coordinates gives $\lambda = \partial F_\eps / \partial x_1 (x)$, so $x$ solves~\eqref{eqn:crit-system}.

 On the open set $V := \{x \in \R^n \suchthat \partial F_\eps/\partial x_1(x) \neq 0\}$, define the smooth map
 \begin{equation*}
   \psi := \left(\frac{\partial F_\eps/\partial x_2}{\partial F_\eps/\partial x_1}, \ldots,
   \frac{\partial F_\eps/\partial x_n}{\partial F_\eps/\partial x_1}\right)\colon V \to \R^{n-1}.
 \end{equation*}
 By the preceding paragraph, the solution set of~\eqref{eqn:crit-system} is $\{x \in X_\eps \cap V \suchthat \psi(x) = c\}$. By Sard's theorem, applied to the restriction of $\psi$ to the $(n-1)$-dimensional manifold $X_\eps \cap V$, we may choose $c$ to be a regular value of $\psi|_{X_\eps \cap V}$. We claim that then every solution $x^*$ of~\eqref{eqn:crit-system} is regular. On $V$, the functions of the system satisfy $\partial F_\eps/\partial x_j - c_j\, \partial F_\eps /\partial x_1 = \frac{\partial F_\eps}{\partial x_1} \cdot (\psi_j - c_j)$, and since $\psi_j(x^*) = c_j$, taking gradients gives
 \begin{equation*}
   \nabla \left(\frac{\partial F_\eps}{\partial x_j} - c_j \frac{\partial F_\eps}{\partial x_1}\right)(x^*)
   = \frac{\partial F_\eps}{\partial x_1}(x^*)\, \nabla \psi_j(x^*).
 \end{equation*}
 The Jacobian determinant of the system~\eqref{eqn:crit-system} at $x^*$ therefore equals
 \[
 \left(\frac{\partial F_\eps}{\partial x_1}(x^*)\right)^{n-1} \det([\nabla F_\eps, \nabla \psi_2, \ldots, \nabla \psi_n]^{\trans}).
 \]
 The latter determinant is non-zero if and only if no non-zero vector lies in $\ker \diff{F_\eps}(x^*) \cap \ker \diff{\psi}(x^*)$; as $\ker \diff{F_\eps}(x^*) = T_{x^*} X_\eps$ has dimension $n-1$, this holds if and only if $\diff{(\psi|_{X_\eps \cap V})}(x^*)$ is injective, or equivalently surjective, which is guaranteed by the choice of $c$.

 \medskip
 \noindent\textbf{Step 4: Counting.}
 For each $i \in [N]$, the restriction of $\ell$ to the compact non-empty set $\Gamma_i$ attains a minimum at some point $z_i$. Since $\Gamma_i$, being a connected component of $X_\eps$, is open in $X_\eps$, the point $z_i$ is a local minimum, and hence a critical point, of $\ell|_{X_\eps}$; by Step~3, it is a regular solution of~\eqref{eqn:crit-system}. (For $n = 1$, take for $z_i$ any point of $\Gamma_i$.) Since the $\Gamma_i$ are pairwise disjoint, the solutions $z_1, \ldots, z_N$ are distinct, so the system~\eqref{eqn:crit-system} has at least $N$ regular solutions. As shown above, it follows that $N$ is bounded by~\eqref{eq:jones}, and since $N$ was an arbitrary finite number of distinct connected components, the claim follows.
 \end{proof}

\printbibliography

\end{document}